\def\bbB{{\mathbb B}}
\def\bbC{{\mathbb C}}
\def\bbD{{\mathbb D}}
\def\cB{{\mathcal B}}
\def\cH{{\mathcal H}}
\newtheorem{theorem}{Theorem}[section]
\newtheorem{lemma}[theorem]{Lemma}
\theoremstyle{definition}
\newtheorem{definition}[theorem]{Definition}
\theoremstyle{remark}
\newtheorem*{rem}{Remark}
\numberwithin{equation}{section}
\begin{document}

\title{A note for Carleson measure on bounded \(\bbC\)-convex domains}

\author{Mingjin Li}
\address{School of Mathematical Sciences, Guizhou Normal University, Guiyang, 550025, P. R. China}
\email{limingjin2022@163.com}
\thanks{*Corresponding author.}

\author{Jianren Long*}
\address{School of Mathematical Sciences, Guizhou  Normal University, Guiyang, 550025, P. R. China.}
\email{longjianren2004@163.com}
\author{Lang Wang}
\address{School of Mathematical Sciences, Guizhou  Normal University, Guiyang, 550025,  P. R. China.}
\email{wanglang2020@amss.ac.cn}
\subjclass[2020]{Primary 46E30; Secondary 32A25, 32A36}



\keywords{Carleson measure, Weakly pseudoconvex domains, Sharp \(\cB\), Bergman spaces}

\begin{abstract}
Let \(0<q<p<\infty\), \(\Omega\) be a bounded \(\bbC\)-convex domains in \(\bbC^n\). We establish several equivalent characterizations for the boundedness  of 
    Carleson embedding \(J_\mu:A_\alpha^p\hookrightarrow L^q(\mu)\) on \(\Omega\) with sharp \(\cB\) type condition.  Furthermore, we prove that the boundedness of \(J_\mu\) is equivalent to its compactness.
\end{abstract}

\maketitle

.

\section{Introduction and main result}
The notion of Carleson measure is an important object in complex analysis and function theory, which was introduced by Carleson \cite{Carleson1962} in his celebrated work on the corona problem in 1962. Specifically, Carleson characterized the boundedness of the Carleson embedding  \(J_\mu:H^p\hookrightarrow L^p(\mu)\)   by using Carleson square 
\(S_{\theta_0,h}=\{re^{i\theta}\in\bbD: 1-h\leq r<1, |\theta-\theta_0|\leq h\}\).  Later, Duren \cite{Duren} extended Carleson's result to the boundedness of  \(J_\mu:H^p\hookrightarrow L^q(\mu)\) for \(0<p\leq q<\infty\) by adapting the original argument, moreover,  the Carleson square  can be replaced by pseudohyperbolic disks.  Luecking \cite{Luecking1991} obtained the boundedness of \(J_\mu: H^p\to L^q(\mu)\) for the case of \(p>q\) by  using Khinchine's inequality and the theory of tent spaces associated with Hardy spaces.

The extension of \(q\)-Carleson measure to Bergman spaces has attracted considerable attention from many scholars, it was characterized by Hastings \cite{Hastings1975} and Oleinik \cite{Oleinik1978} for \(p\leq q\), Luecking \cite{Luecking1993} for \(p>q\).  In higher dimension, Cima-Wogen \cite{Cima1982} and Luecking-Duren-Weir \cite{Luecking1983,Duren2007} stuied the case of unit ball \(\bbB^n\) by using the Euclidean geometry of \(\bbB^n\) and conformal invariants (such as Bergman or Kobayashi balls) respectively. In order to extend the above results to other domains of \(\bbC^n\), Cima-Mercer and Abate-Raissy-Saracco  characterized the \(q\)-Carleson measure for \(A^p\)  on strongly  pseudoconvex domains by using the Euclidean geometric quantities and intrinsic Kobayashi geometry respectively, see \cite{Abate2011,Abate2012,Abate2019,Cima1995} for the details. For the case of weakly pseudoconvex domains, Li-Liu-Wang \cite{Li2024} characterized the \(2\)-Carleson measurable for \(A^2\) by using Kobayashi balls on  bounded convex domains with smooth boundary of finite type. Bi-Su-Zhang \cite{Bi2024} characterized the \(2\)-Carleson measure for \(A^2\) and \(A^p\) respectively on bounded \(\bbC\)-convex domains.

    Khanh-Liu-Thuc \cite{Khanh2019} introduced the  sharp \(\cB\) type condition (see Sect.\ref{sharpb} below) for Bergman kernel. Roughly speaking,  Bergman kernel satisfies the sharp \(\cB\) type condition ensures that the Bergman projection is bounded on \(L^p\) for \(1<p<\infty\).  More recently,  there is a series of work involved function spaces and operator theory related such domains. For example, Khanh-Liu-Thuc-Tien \cite{Khanh2019,Khanh2021}  characterized the boundedness and Schatten class of Bergman-Toeplitz operators for weighted spaces on weakly pseudoconvex domains with Bergman kernel satisfies sharp \(\cB\) type condition.   Zhang \cite{Zhang2025} characterized the \(q\)-Carleson measure for the weighted Bergman spaces \(A_\alpha^p\) on bounded \(\bbC\)-convex domains with sharp \(\cB\)  type condition, and what's more, Zhang's results extended the results in \cite{Bi2024} to the case of \(p\leq q\) and  the weighted Bergman spaces \(A_\alpha^p\), but for the case of \(p>q\), Zhang didn't characterize the full Carleson measure, Zhang's result is stated as follows.
\begin{theorem}\cite[Theorem 1.8]{Zhang2025}\label{Zhang's th}
Let \(\Omega\subset \bbC^n\) be a bounded \(\bbC\)-convex domain whose Bergman kernel satisfies sharp \(\cB\) type condition, \(\mu\) be a finite positive Borel measure on \(\Omega\). Assume that \(0<q<p<\infty\), if \(\mu\) satisfies one of the following three conditions 
\begin{itemize}
  \item [(i)] \(p_\Omega(z)^\frac{2\alpha q}{p} \widetilde{\mu}_t(z)\in L^\frac{p}{p-q} \) for some \(t>1\);
  \item [(ii)] \(\{p_\Omega(a_j)^{2+\frac{2(\alpha-1)q}{p}}\widehat{\mu}_r(a_j)\}_{j=1}^\infty\in l^\frac{p}{p-q}\) for some \(r\in (0,1)\) and \(r\)-lattice \(\{a_j\}_{j=1}^\infty\);
  \item [(iii)] \(p_\Omega(z)^\frac{2\alpha q}{p}\widehat{\mu}_R\in L^\frac{p}{p-q}\) for some \(R\in (0,1)\).
\end{itemize}
Then \(J_\mu:A_\alpha^p\hookrightarrow L^q(\mu) \) is bounded.
\end{theorem}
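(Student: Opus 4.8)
The plan is to prove that condition (ii) implies the boundedness of \(J_\mu\) and then to show that (i), (ii), (iii) are mutually equivalent; boundedness under (i) or (iii) follows immediately. For the first part I would run Luecking's two-step Hölder argument, the standard route for the off-diagonal range \(q<p\). Fix an \(r\)-lattice \(\{a_j\}\) so that the Kobayashi balls \(B_j=B_\Omega(a_j,r)\) cover \(\Omega\) with bounded overlap, and set \(B_j^\ast=B_\Omega(a_j,2r)\). Two geometric facts from the preliminaries are used throughout: the volume comparison \(\nu(B_\Omega(z,r))\approx p_\Omega(z)^2\) together with the doubling property \(\nu(B_j^\ast)\approx\nu(B_j)\), and the near-constancy of the weight, \(p_\Omega(w)\approx p_\Omega(a_j)\) for \(w\in B_j^\ast\), the \(A_\alpha^p\) density being normalized as \(p_\Omega^{-2\alpha}\). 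These rest in turn on the sharp \(\cB\) type estimates for the Bergman kernel.

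First I would record the weighted sub-mean value inequality: since \(|f|^q\) is plurisubharmonic, for \(z\in B_j\) one has
\[
|f(z)|^q\lesssim\frac{1}{\nu(B_j^\ast)}\int_{B_j^\ast}|f(w)|^q\,d\nu(w).
\]
Summing the trivial bound \(\int_{B_j}|f|^q\,d\mu\le\mu(B_j)\sup_{B_j}|f|^q\) over the covering and inserting this estimate gives
\[
\int_\Omega|f|^q\,d\mu\lesssim\sum_j\frac{\mu(B_j)}{\nu(B_j^\ast)}\int_{B_j^\ast}|f|^q\,d\nu.
\]
Now I apply Hölder locally on each \(B_j^\ast\) with exponents \(p/q\) and \(p/(p-q)\), splitting \(|f|^q=\bigl(|f|^p p_\Omega^{-2\alpha}\bigr)^{q/p}\,p_\Omega^{2\alpha q/p}\); using \(p_\Omega(w)\approx p_\Omega(a_j)\) to pull the weight out of the second factor yields
\[
\int_{B_j^\ast}|f|^q\,d\nu\lesssim p_\Omega(a_j)^{2\alpha q/p}\,\nu(B_j^\ast)^{(p-q)/p}\Bigl(\int_{B_j^\ast}|f|^p p_\Omega^{-2\alpha}\,d\nu\Bigr)^{q/p}.
\]

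Substituting and collecting powers of \(p_\Omega(a_j)\approx\nu(B_j)^{1/2}\), the factor multiplying the local \(A_\alpha^p\)-mass becomes exactly \(\beta_j:=p_\Omega(a_j)^{2+2(\alpha-1)q/p}\,\widehat\mu_r(a_j)\), where \(\widehat\mu_r(a_j)=\mu(B_j)/\nu(B_j)\); this is precisely where the exponent appearing in (ii) is produced. A second application of Hölder, now to the sum over \(j\) with conjugate exponents \(p/q\) and \(p/(p-q)\), gives
\[
\int_\Omega|f|^q\,d\mu\lesssim\Bigl(\sum_j\beta_j^{\frac{p}{p-q}}\Bigr)^{\frac{p-q}{p}}\Bigl(\sum_j\int_{B_j^\ast}|f|^p p_\Omega^{-2\alpha}\,d\nu\Bigr)^{\frac{q}{p}}\lesssim\Bigl(\sum_j\beta_j^{\frac{p}{p-q}}\Bigr)^{\frac{p-q}{p}}\|f\|_{A_\alpha^p}^q,
\]
the last step using the bounded overlap of the enlarged balls. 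Since (ii) is exactly \(\{\beta_j\}\in\ell^{p/(p-q)}\), this shows \(J_\mu\) is bounded with norm controlled by \(\|\{\beta_j\}\|_{\ell^{p/(p-q)}}\).

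It remains to establish the equivalences. For (ii)\(\Leftrightarrow\)(iii) I would compare the lattice sum with the integral via \(\sum_j g(a_j)\approx\int_\Omega g(z)\,p_\Omega(z)^{-2}\,d\nu(z)\) for slowly varying \(g\); a direct bookkeeping of exponents shows \(\sum_j\beta_j^{p/(p-q)}\approx\int_\Omega\bigl(p_\Omega^{2\alpha q/p}\,\widehat\mu_R\bigr)^{p/(p-q)}\,d\nu\), which is the finiteness asserted in (iii). For (i)\(\Leftrightarrow\)(iii) one replaces the rough average \(\widehat\mu_R\) by the smooth Berezin-type transform \(\widetilde\mu_t\); the two are pointwise comparable once the Bergman kernel is controlled from above and below on Kobayashi balls, which is exactly what the sharp \(\cB\) type condition supplies. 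The main obstacle is precisely this last step: making the passage between the continuous averages \(\widetilde\mu_t,\widehat\mu_R\) and the discrete sequence \(\{\widehat\mu_r(a_j)\}\) uniform requires the sharp two-sided kernel estimates together with the attendant geometry of Kobayashi balls (doubling, comparability under a change of radius, and near-constancy of \(p_\Omega\)), whereas the Luecking double-Hölder skeleton itself is robust and range-independent.
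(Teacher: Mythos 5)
Your core argument for condition (ii) is correct, and it is the same Luecking-type double-H\"older technique that underlies the paper's own treatment of these implications (inside the proof of Theorem \ref{mainth}), only organized differently. The paper never passes from the lattice condition to boundedness in one step: it first shows the lattice condition implies the integral condition $p_\Omega^{2\alpha q/p}\widehat\mu_R\in L^{p/(p-q)}$ by the covering argument ((iii)$\implies$(iv)), and then deduces boundedness from a single global H\"older inequality together with Lemma \ref{lem4.2}, which hides the sub-mean-value/covering step inside Zhang's Lemma 5.1. You instead keep everything discrete: sub-mean value inequality (Lemma \ref{submean}), local H\"older on each enlarged ball, then H\"older on the sum, with the bookkeeping $\nu(B_j)\asymp p_\Omega(a_j)^2$ (Lemma \ref{polydisks}) and $p_\Omega(w)\asymp p_\Omega(a_j)$ on the enlarged ball \eqref{jubukebi} producing exactly $\beta_j=p_\Omega(a_j)^{2+2(\alpha-1)q/p}\widehat\mu_r(a_j)$. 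Both routes are valid; yours is self-contained and yields the quantitative bound $\|J_\mu\|^q\lesssim\|\{\beta_j\}\|_{\ell^{p/(p-q)}}$ directly, and it has the incidental virtue of never invoking the sharp $\cB$ type condition: the facts you use (Lemma \ref{polydisks}, \eqref{jubukebi}, Lemmas \ref{submean}, \ref{cover}, \ref{diagonal}) are pure $\bbC$-convex geometry, so your misattribution of them to sharp $\cB$ estimates should be corrected --- sharp $\cB$ enters this paper only through the Forelli--Rudin estimate (Lemma \ref{Rudin}), which sufficiency does not need. One necessary repair: under the paper's convention that the radius parameter lies in $(0,1)$ (it is $\tanh$ of the Kobayashi distance), the ball $B_\Omega(a_j,2r)$ is meaningless when $r\ge\frac{1}{2}$; take $B_j^\ast=B_\Omega(a_j,R)$ with $R=\frac{1}{2}(1+r)$, which is precisely the enlargement for which Lemmas \ref{submean} and \ref{cover} provide the sub-mean value inequality and the bounded overlap.

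The genuine weak point is your handling of condition (i). The transform $\widetilde\mu_t$ is never defined in this paper, so it must be imported from Zhang's work; more importantly, your claim that $\widetilde\mu_t$ and $\widehat\mu_R$ are \emph{pointwise} comparable is false in one direction. The kernel lower bound \eqref{lower2} on Kobayashi balls gives $\widehat\mu_R(z)\lesssim$ (a suitably power-weighted) $\widetilde\mu_t(z)$ pointwise, but the reverse inequality fails pointwise in general: $\widetilde\mu_t$ sees mass far from $z$ through the tail of the kernel, and dominating it by $\widehat\mu_R$ is an $L^{p/(p-q)}$-norm statement requiring an integral (Schur-type or discrete convolution) argument. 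Fortunately the theorem asserts only sufficiency of each condition, so the direction you do have pointwise, (i)$\implies$(iii), composed with (iii)$\implies$boundedness, is all that is needed; the three-way equivalence you announce is both unnecessary for the statement and unproved as sketched.
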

Motivated by Theorem \ref{Zhang's th},  we give the following main result in this paper. Theorem \ref{mainth} gives an equivalent characterization of \(q\)-Carleson measure for \(A_\alpha^p\), which generalizes the Theorem \ref{Zhang's th}. Moreover, we also show that \(J_\mu\) is bounded if and only if \(J_\mu\) is compact for the case of \(p>q\).
\begin{theorem}\label{mainth}
Let \(\Omega\subset \bbC^n\) be a bounded \(\bbC\)-convex domain whose Bergman kernel of sharp \(\cB\) type. Let \(\mu\) be a finite positive Borel measure, \(0<q<p<\infty\). Then the following statements are equivalent.
\begin{itemize}
  \item [(i)] \(J_\mu: A_\alpha^p\hookrightarrow L^q(\mu)\) is compact;
  \item [(ii)] \(J_\mu: A_\alpha^p\hookrightarrow L^q(\mu)\) is bounded;
  \item [(iii)] For some \(r\in (0,1)\) and \(r\)-lattice \(\{a_j\}_{j=1}^\infty\), we have 
  \[\left\{p_\Omega(a_j)^{2+\frac{2q(\alpha-1)}{p}}\widehat{\mu}_r(a_j)\right\}_{j=1}^\infty\in l^\frac{p}{p-q};\]
  \item [(iv)] \(p_\Omega(z)^\frac{2\alpha q}{p}\widehat{\mu}_R(z)\in L^\frac{p}{p-q}(\Omega)\) for some \(R\in (0,1)\). 
  \item [(v)] \(p_\Omega(z)^\frac{2\alpha q}{p} \widehat{\mu}_R(z)\in L^\frac{p}{p-q}(\Omega)\) for all \(R\in (0,1)\). 
\end{itemize}
\end{theorem}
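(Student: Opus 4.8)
The plan is to close a single cycle of implications, taking advantage of the fact that Theorem~\ref{Zhang's th} already supplies the sufficiency half of the statement. Since a compact operator is bounded, $(i)\Rightarrow(ii)$ is immediate, and $(v)\Rightarrow(iv)$ is trivial because a condition holding for all $R$ holds for some $R$. Theorem~\ref{Zhang's th} gives both $(iii)\Rightarrow(ii)$ and $(iv)\Rightarrow(ii)$. Consequently the whole theorem follows once I establish three things: the necessity implication $(ii)\Rightarrow(iii)$, the geometric equivalences among $(iii)$, $(iv)$ and $(v)$, and the upgrade of boundedness to compactness, say $(iv)\Rightarrow(i)$. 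I expect $(ii)\Rightarrow(iii)$ to be the main obstacle.

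For $(ii)\Rightarrow(iii)$, fix an $r$-lattice $\{a_j\}$ and observe that $\frac{p}{p-q}$ is conjugate to $\frac{p}{q}$; by duality of $l^{p/(p-q)}$ with $l^{p/q}$ it is enough to prove
\[
\sum_j p_\Omega(a_j)^{2+\frac{2q(\alpha-1)}{p}}\,\widehat{\mu}_r(a_j)\,c_j\le C\Big(\sum_j c_j^{p/q}\Big)^{q/p}
\]
for every nonnegative sequence $\{c_j\}$. To produce this I would test $J_\mu$ against the randomized functions
\[
f_t(z)=\sum_j c_j^{1/q}\,\lambda_j\,r_j(t)\,K(z,a_j),
\]
where $r_j(t)$ are the Rademacher functions and the weights $\lambda_j$ are tuned so that $\lambda_j\,K(\cdot,a_j)$ carries the correct power of $p_\Omega(a_j)$ coming from $\|K(\cdot,a_j)\|_{A_\alpha^p}$. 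Applying the boundedness hypothesis $\int_\Omega|f_t|^q\,d\mu\le\|J_\mu\|^q\|f_t\|_{A_\alpha^p}^q$, integrating in $t\in[0,1]$, and invoking Khinchine's inequality on each side turns both sides into square functions $\big(\sum_j c_j^{2/q}\lambda_j^2|K(z,a_j)|^2\big)^{s/2}$, with $s=q$ against $d\mu$ and $s=p$ against $dV_\alpha$. On the $\mu$-side I would discard every term except the diagonal one over each Kobayashi ball $B(a_j,r)$ and sum using the bounded overlap of the lattice, invoking the sharp $\cB$ type condition to replace $|K(z,a_j)|$ by $|K(a_j,a_j)|$ for $z\in B(a_j,r)$, the latter being comparable to a fixed negative power of $p_\Omega(a_j)$; this recovers the left-hand side above. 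On the $A_\alpha^p$-side I would first use $q/p<1$ and Hölder to reduce $\int_0^1\|f_t\|_{A_\alpha^p}^q\,dt$ to a power of $\int_0^1\|f_t\|_{A_\alpha^p}^p\,dt$, and then control the resulting integral $\int_\Omega\big(\sum_j c_j^{2/q}\lambda_j^2|K(z,a_j)|^2\big)^{p/2}\,dV_\alpha$ by $\sum_j c_j^{p/q}$ using the atomic/Schur-type kernel estimates that the sharp $\cB$ type condition provides (the same machinery underlying Theorem~\ref{Zhang's th}). The delicate points are the exact bookkeeping of the $p_\Omega$-exponents between the weighted norm and the kernel lower bound, and guaranteeing the square-function estimates remain valid throughout the range $0<q<p$, in particular for $q<2$.

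The equivalences among $(iii)$, $(iv)$ and $(v)$ are geometric and independent of the operator. To pass from the lattice condition to the integral condition I would cover $\Omega$ by the balls $B(a_j,r)$, dominate $\widehat{\mu}_R(z)$ by a sum of $\widehat{\mu}_{r'}(a_j)$ over the boundedly many lattice indices with $z\in B(a_j,r)$, and use the comparability of $p_\Omega$ on each ball together with the finite overlap to bound $\|p_\Omega^{2\alpha q/p}\widehat{\mu}_R\|_{L^{p/(p-q)}(\Omega)}$ by the $l^{p/(p-q)}$ norm in $(iii)$; the reverse passage discretizes the integral over the same cover. Since changing the radius only alters the number of overlapping balls by a dimensional constant, these arguments work for every $R\in(0,1)$, which yields both $(iii)\Leftrightarrow(iv)$ and $(iv)\Leftrightarrow(v)$.

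Finally, for $(iv)\Rightarrow(i)$, Theorem~\ref{Zhang's th} already gives boundedness, so only compactness remains. I would verify the sequential criterion: if $\{f_k\}$ is bounded in $A_\alpha^p$ and $f_k\to0$ uniformly on compact subsets of $\Omega$, then $\|f_k\|_{L^q(\mu)}\to0$. Splitting $\int_\Omega|f_k|^q\,d\mu$ over a compact exhaustion $K\Subset\Omega$ and its complement, the part over $K$ vanishes by uniform convergence, while over $\Omega\setminus K$ the integrability in $(iv)$ forces the tail $\|p_\Omega^{2\alpha q/p}\widehat{\mu}_R\|_{L^{p/(p-q)}(\Omega\setminus K)}$ to be arbitrarily small, so the local Carleson estimate bounds that part by $\varepsilon\sup_k\|f_k\|_{A_\alpha^p}^q$. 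Because $p>q$ makes the defining condition a genuinely summable, tail-vanishing one, this gives $\|f_k\|_{L^q(\mu)}\to0$ and hence compactness, completing the cycle.
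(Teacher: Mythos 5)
Your architecture is sound and, in its essentials, is the paper's own: the only hard implication is (ii)$\Rightarrow$(iii), which you attack exactly as the paper does --- Rademacher-randomized linear combinations of kernel functions at lattice points, Khinchine's inequality after integrating in $t$, discarding off-diagonal terms using the lower bound for $|K_\Omega(z,a_j)|$ on Kobayashi balls, and then dualizing $l^{p/q}$ against $l^{p/(p-q)}$; the equivalences (iii)$\Leftrightarrow$(iv)$\Leftrightarrow$(v) are covering/finite-overlap arguments (the paper instead quotes Lemma \ref{wuguan} for the radius-independence, but its content is proved the way you describe); and compactness is the same tail-splitting argument over a compact exhaustion. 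Your one structural difference --- invoking Theorem \ref{Zhang's th} as a black box for (iii)$\Rightarrow$(ii) and (iv)$\Rightarrow$(ii) instead of reproving them --- is legitimate and slightly cleaner than the paper, which re-derives (iv)$\Rightarrow$(ii) from Lemma \ref{lem4.2} and H\"older. (A small attribution slip: the replacement of $|K_\Omega(z,a_j)|$ by $K_\Omega(a_j,a_j)$ on $B_\Omega(a_j,r)$ is the $\mathbb{C}$-convexity estimate \eqref{lower2}, coming from Lemma \ref{lower} and \eqref{jubukebi}, not from the sharp $\mathcal{B}$ condition.)

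There is, however, one concrete defect: your test functions $f_t(z)=\sum_j c_j^{1/q}\lambda_j r_j(t)K_\Omega(z,a_j)$ use the \emph{first} power of the kernel, and that is not enough for the full range of parameters. With the paper's convention $\alpha\le 0$, the Forelli--Rudin estimate (Lemma \ref{Rudin}, applied with exponent $t=p$ on $|K_\Omega|$) requires $-1<-2\alpha<2p-2$, i.e. $p>1-\alpha$. For $p\le 1-\alpha$ the function $K_\Omega(\cdot,a_j)$ need not belong to $A_\alpha^p$ at all, so there is no finite norm against which to ``tune'' the weights $\lambda_j$, and the square-function bound $\int_\Omega\bigl(\sum_j c_j^{2/q}\lambda_j^2|K_\Omega(z,a_j)|^2\bigr)^{p/2}dV_\alpha\lesssim\sum_j c_j^{p/q}$ that you need on the $A_\alpha^p$-side is unavailable. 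This is precisely what the paper's Theorem \ref{atomic} is for: the atoms are $K_\Omega(z,a_j)^N K_\Omega(a_j,a_j)^{\frac{1-\alpha}{p}-N}$ with $N$ chosen so that $Np+\alpha>1$, which makes the Schur-test/Forelli--Rudin machinery applicable for every $0<p<\infty$, and all your downstream steps (Khinchine, diagonal extraction via \eqref{lower2}, the duality step producing the exponent $2+\frac{2q(\alpha-1)}{p}$) go through verbatim with $K_\Omega^N$ in place of $K_\Omega$. So the proposal is repaired by inserting this power $N$; as written, the key step fails whenever $0<p\le 1-\alpha$.
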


This paper is organized as follows. Some preliminaries are given in Section \ref{section2}; An atomic type decomposition and the proof of Theorem \ref{mainth} are showed in Section \ref{section3}.

\section{Preliminaries} \label{section2}
\subsection{Notations} 
\begin{itemize}
  \item For a domain \(\Omega\),  \(\cH(\Omega)\) denotes the  set of holomorphic functions on \(\Omega\).
    \item  \(|\cdot|\) denote the standard Euclidean norm.
    \item For \(z\in \Omega\subsetneq \bbC^n\),  \(\delta_\Omega(z):=\inf\limits_{z_0\in\partial\Omega} |z-z_0|\).
     \item  \(a\lesssim b\) or \(a\gtrsim b\) means that there exists  a positive constant \(C\) such that \(a\leq Cb\) or \(a\geq Cb\), \(a\asymp b\) if \(a\lesssim b\) and \(a\gtrsim b\). 
\end{itemize}

\subsection{Invariants} \label{section2.2}

\hfil

 Given a domain \( \Omega\subset \mathbb{C}^n \) (with \( n \geq 2 \)), the (infinitesimal) \textit{Kobayashi metric} is the pseudo-Finsler metric defined by  
\[
\kappa_\Omega(x; v) := \inf \bigl\{ |\xi| : f \in H(\mathbb{D}, \Omega), \text{ with } f(0) = x,\; (df)_0(\xi) = v \bigr\},
\] where \(H(\bbD,\Omega)\) denotes the holomorphic section from unit disc \(\bbD\) to \(\Omega\).
For any curve \(\sigma : [a, b] \to \Omega\), the \textit{Kobayashi length} is
\[
l_\kappa(\sigma) := \int_a^b \kappa_\Omega \bigl( \sigma(t); \sigma'(t) \bigr) \, dt,
\]
 and the \textit{Kobayashi pseudo-distance} on \(\Omega\) is
\begin{align*}
  d_\Omega^K(x, y) := &\inf_{\sigma} \bigl\{ l_\kappa(\sigma) \mid \sigma : [a, b] \to \Omega \text{ is any absolutely continuous curve with } \\&\sigma(a) = x \text{ and } \sigma(b) = y \bigr\}.
\end{align*}



Suppose \( \Omega \) is a bounded domain in \(\bbC^n\), \( z \in \Omega \) and \( r \in (0, 1) \). Let \( B_\Omega(z, r) \) be the Kobayashi balls of centre \( z \) and radius \( \frac{1}{2} \log \frac{1 + r}{1 - r} \), i.e. 
\[
B_\Omega(z, r) = \bigl\{ z \in X \mid \tanh d_\Omega^K(z, w) < r \bigr\}.
\]
Note that \( \rho_\Omega = \tanh d_\Omega^K \) is still a distance on \( X \) since \(\tanh\) is a strictly convex function on \( \mathbb{R}^+ \).

For a bounded pseudoconvex domain \(\Omega\subset \bbC^n\),  Set \(A^2\) be the Hilbert spaces consists of all \(f\in \cH(\Omega)\) such that \(\|f\|_{A^2}^2=\int_\Omega |f(z)|^2 dV(z)<\infty\), where \(dV(z)\) is the ordinary Lebesgue measure. Riesz's theorem yields that there exists  a unique reproducing kernel function \(K_\Omega(z,w)\) such that 
\[f(z)=\int_\Omega f(w)\overline{K_\Omega(w,z)}dV(w)=\int_\Omega f(w)K_\Omega(z,w)dV(w),\] \(K_\Omega(z,w)\) is called the \textit{Bergman kernel}. For \(\alpha\leq 0\), the weighted Lebesgue space \(L_\alpha^p\) consists of all measurable functions such that \[\|f\|_{L_\alpha^p}^p=\int_\Omega |f(z)|^p K_\Omega(z,z)^\alpha dV(z)<\infty,\]  \(A_\alpha^p:=L_\alpha^p\bigcap \cH(\Omega)\).

Let \(X\) be the space consists of holomorphic functions, for \(0<p,q<\infty\) and a positive Borel measure \(\mu\), we say that \(\mu\) is a \(q\)-Carleson measure for \(X\) if there exists a finite constant \(C>0\) such that 
  \[\|f\|_{L^q(\mu)}\leq C\|f\|_X,\quad f\in X,\] where \(\|f\|_{L^q(\mu)}=\left(\int_\Omega |f(z)|^q d\mu\right)^\frac{1}{q}<\infty.\) It is clear that \(\mu\) is a \(q\)-Carleson measure for \(X\) if and only if the Carleson embedding
    \[J_\mu:X\hookrightarrow L^q(\mu)\] is bounded.
    What's more, \(\mu\) is a vanishing Carleson measure if the embedding \(J_\mu\) is compact in the sense that  \(\lim\limits_{j\to\infty}\|f_j\|_{L^q(\mu)}=0\)  whenever \(\{f_j\}_{j=1}^\infty\) is a bounded sequence in \(X\) which converges to \(0\) uniformly on any compact subset of \(\Omega\). For any given \(r\in(0,1)\), the averaging function of \(\mu\) related to Kobayashi ball is defined as 
    \[\widehat{\mu}_r(z):=\frac{\mu(B_\Omega(z,r))}{V(B_\Omega(z,r))},\quad z\in \Omega,\] where \(V(B_\Omega(z,r))\) is the Euclidean volume of \(B_\Omega(z,r)\).

\subsection{ Geometry of \(\bbC\)-convex domains}
\begin{definition}\label{defC-convex}
 Suppose \(\Omega\) is a domain in \(\bbC^n\). For any affine complex lines \(l\), if \(l\bigcap\Omega\) is simply connected whenever \(l\bigcap\Omega\neq\emptyset\), then \(\Omega\) is said to a \textit{\(\bbC\)-convex }domain.
\end{definition}

  The following geometric objects related to an arbitrary domain \(\Omega \subset \mathbb{C}^n\) containing no complex lines are needed in this paper. Let \(q \in \Omega\), and let \(\delta_{\Omega}(q)\) denote the distance from \(q\) to the boundary \(\partial \Omega\). Fix \(q_1 \in \partial \Omega\) so that 
\[
\eta_1(q) := |q_1 - q| = \delta_{\Omega}(q).
\] 
Put
\[
H_1 := q + \operatorname{Span}(q_1 - q)^\perp \quad \text{and} \quad \Omega_1 := \Omega \cap H_1.
\] 
Let \(q_2 \in \partial \Omega_1\) so that 
\[
\eta_2(q) := |q_2 - q| = d_{\Omega_1}(q).
\] 
Set 
\[
H_2 := q + \operatorname{Span}(q_1 - q, q_2 - q)^\perp, \quad \Omega_2 := \Omega \cap H_2,
\] 
and continuing this procedure we are led to an orthonormal basis 
\[
e_j = \frac{q_j - q}{|q_j - q|}, \quad 1 \leq j \leq n,
\] 
which is called \textit{minimal for \(\Omega\) at \(q\)}. It should be mentioned that after rotation we may always assume that \(e_1, e_2, \dots, e_n\) is the standard basis of \(\mathbb{C}^n\). Clearly, if \(\Omega\) is a bounded domain, then for any \(z \in \Omega\) we have
\[
\eta_1(z) \leq \eta_2(z) \leq \dots \leq \eta_n(z) \leq \operatorname{diam}(\Omega) < \infty,
\]
 where 
\(
\operatorname{diam}(\Omega) := \sup\{|z-w|: z, w \in \Omega\}.
\)


\newpage

The following lemma describes the geometric characteristics of Kobayashi balls on bounded \(\bbC\)-convex domains. 
\begin{lemma}\cite[Theorem 1]{NIkolv2011}\label{polydisks}
   Let $\Omega \subset \mathbb{C}^n$ be a $\mathbb{C}$-convex domain containing no complex lines, and $z \in \Omega$. Then for any $r \in (0, 1)$,
   \[
\mathbb{D}^n(z, c_1\eta(z)) \subset B_\Omega(z, r) \subset \mathbb{D}^n(z, c_2\eta(z)),
\]
where $c_i$ $(i = 1, 2)$ is some constant depending only on $n$ and $r$, $\eta(z) := (\eta_1(z), \dots, \eta_n(z))$, and
\[
\mathbb{D}^n(z, \eta(z)) := \{w \in \mathbb{C}^n : |w_i - z_i| < \eta_i(z),\ i = 1, \dots, n\}.\]
\end{lemma}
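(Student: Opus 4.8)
The plan is to derive both inclusions from the distance-decreasing property of the Kobayashi pseudo-distance under holomorphic maps, rather than by integrating the infinitesimal metric along curves. The numbers \(\eta_j(z)\) are designed precisely so that, near \(z\), the domain \(\Omega\) is squeezed between two explicit models calibrated by the polyradius \(\eta(z)\): an inscribed polydisc from the inside, and a family of one-dimensional projections from the outside. Monotonicity of \(d^K_\Omega\) under inclusions and projections then transfers the Poincaré geometry of the models to \(\Omega\). Throughout I work in the minimal-basis coordinates \(e_1,\dots,e_n\) at \(z\), writing \(w_j=\langle w,e_j\rangle\), and I use \(\rho_\Omega=\tanh d^K_\Omega\) so that \(B_\Omega(z,r)=\{w:\rho_\Omega(z,w)<r\}\).

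For the inner inclusion \(\mathbb{D}^n(z,c_1\eta(z))\subset B_\Omega(z,r)\), I first record the geometric fact that the minimal basis forces a full polydisc of comparable polyradius inside \(\Omega\), i.e. \(P:=\mathbb{D}^n(z,a\,\eta(z))\subset\Omega\) for a dimensional constant \(a=a(n)\); this is where \(\mathbb{C}\)-convexity enters on the ``from inside'' side. Since the inclusion \(P\hookrightarrow\Omega\) is holomorphic, the Kobayashi distance decreases, so \(\rho_\Omega(z,w)\le\rho_P(z,w)\) for \(w\in P\). For the polydisc \(P\) the product formula gives \(\rho_P(z,w)=\max_{1\le j\le n}|w_j-z_j|/(a\,\eta_j(z))\). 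Hence requiring \(|w_j-z_j|<ar\,\eta_j(z)\) for all \(j\) forces \(\rho_\Omega(z,w)<r\), that is \(w\in B_\Omega(z,r)\); thus \(c_1=ar\) works, and the constant is read off explicitly from \(a\) and \(r\).

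For the outer inclusion \(B_\Omega(z,r)\subset\mathbb{D}^n(z,c_2\eta(z))\) I would use the ``from outside'' face of \(\mathbb{C}\)-convexity, namely that a \(\mathbb{C}\)-affine surjection of a \(\mathbb{C}\)-convex domain onto \(\mathbb{C}\) is again \(\mathbb{C}\)-convex, hence simply connected. For each \(j\) let \(\pi_j(w)=\langle w-z,e_j\rangle\) be the projection onto the \(j\)-th minimal-basis line; then \(G_j:=\pi_j(\Omega)\subset\mathbb{C}\) is a bounded simply connected domain with \(\pi_j(z)=0\) and \(\operatorname{dist}(0,\partial G_j)\asymp\eta_j(z)\). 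As \(\pi_j\) is distance-decreasing, \(\rho_{G_j}(0,\pi_j(w))\le\rho_\Omega(z,w)\), and the Koebe quarter theorem gives \(\kappa_{G_j}(0;1)\asymp 1/\operatorname{dist}(0,\partial G_j)\asymp 1/\eta_j(z)\); a one-variable Poincaré estimate then converts \(\rho_\Omega(z,w)<r\) into \(|w_j-z_j|=|\pi_j(w)|\le c_2\,\eta_j(z)\). Taking the worst \(j\) gives \(w\in\mathbb{D}^n(z,c_2\eta(z))\).

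The routine parts are the one-variable Poincaré computations in the model domains and the bookkeeping of \(c_1,c_2\) in terms of \(n\) and \(r\). I expect the main obstacle to be the two purely geometric inputs that make the \(\eta_j(z)\) honest in both directions: that the inscribed polydisc \(\mathbb{D}^n(z,a\,\eta(z))\) actually fits inside \(\Omega\), and that the projections satisfy the upper comparison \(\operatorname{dist}(0,\partial G_j)\lesssim\eta_j(z)\), i.e. that \(\Omega\) does not extend too far in the \(e_j\)-direction even as the other coordinates vary. Both rest on linear convexity of \(\mathbb{C}\)-convex domains, together with the subtlety that the successive distances \(\eta_1\le\dots\le\eta_n\) are measured in the \emph{nested orthogonal slices} \(H_j\), so one must verify that passing to \(H_j\) does not distort the directional extent by more than a dimensional factor. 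Establishing these comparisons — equivalently, the two-sided estimate \(\kappa_\Omega(z;v)\asymp\max_{1\le j\le n}|v_j|/\eta_j(z)\) for the infinitesimal metric — is the technical heart of the argument.
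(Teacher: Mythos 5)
The paper itself contains no proof of this lemma: it is imported verbatim from Nikolov--Trybula \cite{NIkolv2011}, whose argument rests on the Kobayashi metric estimates of Nikolov--Pflug--Zwonek for $\mathbb{C}$-convex domains. Measured against that, the inner half of your proposal is sound and is essentially the standard route: the inscribed minimal polydisc $\mathbb{D}^n(z,a\,\eta(z))\subset\Omega$ with $a=a(n)$ is a true (and known) lemma for $\mathbb{C}$-convex domains, and the decreasing property plus the product formula for the polydisc then give $c_1=ar$. The genuine gap is in your outer inclusion: the comparison $\operatorname{dist}(0,\partial G_j)\lesssim\eta_j(z)$ for the \emph{orthogonal} projections $G_j=\pi_j(\Omega)$ is false for $j\ge 2$, even for convex domains. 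Take $n=2$, $z=0$, and for large $R$ let
\[
\Omega=\operatorname{int}\operatorname{conv}\Bigl(\bigl(\mathbb{D}(0,1)\times\mathbb{D}(0,2)\bigr)\cup\bigl(\{R\}\times\mathbb{D}(0,R)\bigr)\Bigr).
\]
Writing points of $\Omega$ as $s(R,R\zeta)+(1-s)(u_1,u_2)$ with $|u_1|<1$, $|u_2|<2$, $|\zeta|<1$, one checks: $\Omega\subset\{\operatorname{Re}w_1>-1\}$ and $\mathbb{B}(0,1)\subset\Omega$, so $\delta_\Omega(0)=1$ and every nearest boundary point has the form $(e^{i\theta},0)$; hence $e_1\propto(1,0)$, $H_1=\{w_1=0\}$, $\eta_1=1$, and on the slice $w_1=0$ the constraint $|u_1|=sR/(1-s)<1$ forces $s<1/(R+1)$, whence $|w_2|\le sR+2(1-s)<3$, so $\eta_2\in[2,3]$ \emph{uniformly in $R$}. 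Yet $G_2=\pi_2(\Omega)\supset\mathbb{D}(0,R)$, so $\operatorname{dist}(0,\partial G_2)\ge R\gg\eta_2$, and your Koebe step only yields $|w_2|\lesssim_r R$ instead of $|w_2|\lesssim_r\eta_2$: the far-away witnesses cost nothing in the hyperbolic metric of $G_2$, although reaching them is expensive inside $\Omega$. (The lemma itself asserts $B_\Omega(0,r)\subset\{|w_2|\le C(r)\eta_2\}$ independently of $R$, so the loss is real, not an artifact of constants.)

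The cited proof avoids exactly this trap by never projecting orthogonally. Linear convexity of $\mathbb{C}$-convex domains provides, through the relevant boundary point, a complex hyperplane disjoint from $\Omega$; projecting $\Omega$ \emph{parallel to that hyperplane} onto the line through $z$ produces a simply connected plane domain that omits a point at distance exactly $d_\Omega(z;v)$ from $z$ (the boundary distance along the line), which gives $\kappa_\Omega(z;v)\ge|v|/(4\,d_\Omega(z;v))$ and its integrated form $d_\Omega^K(z,w)\ge\frac14\log\bigl(1+|z-w|/d_\Omega(z;w-z)\bigr)$; the outer polydisc inclusion then follows from dimensional comparisons between the directional distances $d_\Omega(z;\cdot)$ and the $\eta_j(z)$. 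The obliqueness is precisely what $\pi_j$ discards: orthogonal projection does not collapse the supporting hyperplane to a point, so the image need not omit anything at scale $\eta_j$, as the example shows. The price of the oblique projection is that it controls an affine functional rather than the coordinate $w_j-z_j$, and converting back to minimal-basis coordinates with constants depending only on $n$ is the bookkeeping that occupies \cite{NIkolv2011}. Note finally that even your intended reduction to $\kappa_\Omega(z;v)\asymp\max_j|v_j|/\eta_j(z)$ would not suffice as stated: a pointwise infinitesimal estimate at $z$ alone does not yield the ball inclusion, since $\eta_j$ varies along the connecting curves; one needs the distance-version inequality above (or uniform control of $\eta_j$ along geodesics), which is exactly what the cited argument supplies.
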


For plurisubharmonic functions, the sub mean value inequality are valid for Kobayashi balls on bounded \(\bbC\)-convex domains.

\begin{lemma}\cite[Proposition 2.3]{Bi2024}\label{submean}
Let $\Omega \subset \mathbb{C}^n$ be a bounded $\mathbb{C}$-convex domain. For any $r \in (0, 1)$, we denote $R := \frac{1}{2}(1 + r) $. Then for any $z_0 \in \Omega$ and $z \in B_{\Omega}(z_0, r)$, we have
\[
\varphi(z)\lesssim  \frac{1}{V(B_{\Omega}(z_0, r))} \int_{B_{\Omega}(z_0, R)} \varphi dV
\]
for all nonnegative plurisubharmonic functions $\varphi$ on $\Omega$. 
\end{lemma}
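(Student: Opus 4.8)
The plan is to deduce the inequality from the elementary sub-mean value property of plurisubharmonic functions over polydiscs, transferring it to Kobayashi balls by means of the squeezing Lemma \ref{polydisks}. First I would note that since \(\Omega\) is bounded it contains no complex lines, so Lemma \ref{polydisks} is at our disposal. Recall that a plurisubharmonic function \(\varphi\) is subharmonic in each complex variable separately, so iterating the one-variable sub-mean value property yields, for any polyradius \(\rho=(\rho_1,\dots,\rho_n)\) with \(\mathbb{D}^n(w,\rho)\subset\Omega\),
\[
\varphi(w)\leq \frac{1}{V(\mathbb{D}^n(w,\rho))}\int_{\mathbb{D}^n(w,\rho)}\varphi\,dV .
\]
This holds in any unitary frame, so we are free to use the minimal basis of \(\Omega\) at \(w\).

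Next I would produce a polydisc centred at \(z\) that sits inside \(B_\Omega(z_0,R)\). Because \(\rho_\Omega=\tanh d_\Omega^K\) is a genuine distance, the triangle inequality gives, for \(w\in B_\Omega(z,s)\),
\[
d_\Omega^K(z_0,w)\leq d_\Omega^K(z_0,z)+d_\Omega^K(z,w)<\tfrac12\log\tfrac{1+r}{1-r}+\tfrac12\log\tfrac{1+s}{1-s}.
\]
Since \(R=\tfrac12(1+r)>r\), the value \(s\in(0,1)\) determined by \(\tfrac{1+s}{1-s}=\tfrac{(1+R)(1-r)}{(1-R)(1+r)}>1\) makes the right-hand side equal to \(\tfrac12\log\tfrac{1+R}{1-R}\), so \(B_\Omega(z,s)\subset B_\Omega(z_0,R)\). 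Applying Lemma \ref{polydisks} at \(z\) with radius \(s\) yields a constant \(c_1=c_1(n,s)\) with
\[
\mathbb{D}^n(z,c_1\eta(z))\subset B_\Omega(z,s)\subset B_\Omega(z_0,R).
\]
Feeding this polydisc into the sub-mean value estimate and using \(\varphi\geq0\) to enlarge the domain of integration gives
\[
\varphi(z)\leq \frac{1}{V(\mathbb{D}^n(z,c_1\eta(z)))}\int_{B_\Omega(z_0,R)}\varphi\,dV .
\]

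It remains to absorb the normalizing volume into \(V(B_\Omega(z_0,r))\), which I expect to be the main point. Since \(V(\mathbb{D}^n(z,c_1\eta(z)))=c_1^{2n}\pi^n\prod_j\eta_j(z)^2\), the task is to show \(\prod_j\eta_j(z)^2\asymp\prod_j\eta_j(z_0)^2\) with constants depending only on \(n\) and \(r\). The sandwich in Lemma \ref{polydisks} already gives \(V(B_\Omega(z_0,r))\asymp\prod_j\eta_j(z_0)^2\). For the comparison of \(\eta(z)\) and \(\eta(z_0)\) I would use the symmetry of \(d_\Omega^K\): from \(z\in B_\Omega(z_0,r)\) we also get \(z_0\in B_\Omega(z,r)\), and the triangle inequality shows \(B_\Omega(z_0,r)\subset B_\Omega(z,s')\) with \(\tfrac{1+s'}{1-s'}=\big(\tfrac{1+r}{1-r}\big)^2\). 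Combining the two inclusions with Lemma \ref{polydisks} gives
\[
\mathbb{D}^n(z_0,c_1\eta(z_0))\subset B_\Omega(z_0,r)\subset B_\Omega(z,s')\subset \mathbb{D}^n(z,c_2\eta(z)),
\]
and passing to volumes yields \(\prod_j\eta_j(z_0)^2\lesssim\prod_j\eta_j(z)^2\); the reverse inequality follows by exchanging the roles of \(z\) and \(z_0\). Hence \(V(\mathbb{D}^n(z,c_1\eta(z)))\asymp V(B_\Omega(z_0,r))\), and substituting this into the previous display completes the proof. The only delicate ingredient is this uniform two-sided comparability of the polyradii \(\eta(z)\) over a fixed Kobayashi ball; everything else is a formal consequence of the polydisc sub-mean value property and the squeezing in Lemma \ref{polydisks}.
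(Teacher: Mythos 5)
The paper does not prove this lemma itself --- it imports it verbatim from \cite[Proposition 2.3]{Bi2024} --- so there is no in-text argument to compare against; your proof is correct and follows exactly the route one would expect from the paper's toolkit: the Nikolov--Trybula polydisc sandwich (Lemma \ref{polydisks}), the iterated one-variable sub-mean value inequality over a polydisc in the minimal frame, and the triangle-inequality inclusions $B_\Omega(z,s)\subset B_\Omega(z_0,R)$ and $B_\Omega(z_0,r)\subset B_\Omega(z,s')$ with $s,s'$ depending only on $r$. The one remark worth making is that your ``delicate ingredient,'' the two-sided comparability $p_\Omega(z)\asymp p_\Omega(z_0)$ for $z\in B_\Omega(z_0,r)$, is exactly the paper's estimate \eqref{jubukebi}, so you could have cited it and saved the last paragraph; your self-contained derivation of it via the chain $\mathbb{D}^n(z_0,c_1\eta(z_0))\subset B_\Omega(z_0,r)\subset B_\Omega(z,s')\subset \mathbb{D}^n(z,c_2\eta(z))$ is nonetheless sound, since set inclusion passes to volumes even though the two polydiscs live in different minimal frames.
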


The following cover type lemma plays a significant role in the proof of Theorem \ref{mainth}.
\begin{lemma}\cite[Lemma 2.4]{Bi2024}\label{cover}
 Let \(\Omega \subset \mathbb{C}^n\) be a bounded \(\mathbb{C}\)-convex domain. For any \(r \in (0, 1)\), there exists a positive integer \(m\) and a sequence \(\{a_j\}_{j=0}^\infty \subset \Omega\) satisfy the following two conditions:
\begin{itemize}
  \item [(i)] \(\Omega = \bigcup_{j=0}^\infty B_\Omega(a_j, r)\);
  \item [(ii)] Every point in \(\Omega\) belongs to at most \(m\) of the balls \(B_\Omega(a_j,R)\), where \(R := \frac{1}{2}(1 + r)\).
\end{itemize}
  Such a sequence \(\{a_j\}_{j=0}^\infty \subset \Omega\) is said to be an \(r\)-lattice in \(\Omega\).
\end{lemma}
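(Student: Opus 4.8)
The plan is to realize the lattice as a maximal Kobayashi-separated set and then extract the finite-overlap bound (ii) from a volume-comparison argument powered by Lemma \ref{polydisks}. Throughout I write $\tau(s):=\frac{1}{2}\log\frac{1+s}{1-s}$, so that $B_\Omega(z,s)=\{w\in\Omega:\ d_\Omega^K(z,w)<\tau(s)\}$; since $\Omega$ is bounded it is Kobayashi hyperbolic, hence $d_\Omega^K$ is a genuine distance obeying the triangle inequality. First I would use Zorn's lemma to choose a set $\{a_j\}\subset\Omega$ that is maximal among all subsets whose points are pairwise separated by $d_\Omega^K(a_i,a_j)\ge\tau(r)$ for $i\ne j$. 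The balls $B_\Omega(a_j,\tfrac r2)$ are then pairwise disjoint (because $2\tau(\tfrac r2)<\tau(r)$, an elementary consequence of the convexity of $s\mapsto\tau(s)$), and since $\Omega\subset\bbC^n$ is separable this collection is at most countable and may be enumerated as $\{a_j\}_{j=0}^\infty$. Property (i) is immediate from maximality: if some $z\in\Omega$ lay outside every $B_\Omega(a_j,r)$, then $d_\Omega^K(z,a_j)\ge\tau(r)$ for all $j$, so $z$ could be adjoined to the family, a contradiction; hence $\Omega=\bigcup_{j}B_\Omega(a_j,r)$.

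For (ii), fix $z\in\Omega$ and set $S:=\{j:\ z\in B_\Omega(a_j,R)\}$ with $R=\tfrac12(1+r)$. For $j\in S$ we have $a_j\in B_\Omega(z,R)$, while the disjoint balls $B_\Omega(a_j,\tfrac r2)$ are all contained, by the triangle inequality, in the single Kobayashi ball $B_\Omega(z,R')$ of $d_\Omega^K$-radius $\tau(R)+\tau(\tfrac r2)$, where $R'=\tanh\!\bigl(\tau(R)+\tau(\tfrac r2)\bigr)\in(0,1)$. The crux of the argument, and the step I expect to be the main obstacle, is the volume comparability of Kobayashi balls: I must produce a constant $c=c(n,r)>0$ with
\[
V\bigl(B_\Omega(a_j,\tfrac r2)\bigr)\ \ge\ c\,V\bigl(B_\Omega(z,R')\bigr)\qquad\text{for every }j\in S.
\]
This is exactly where Lemma \ref{polydisks} enters: the sandwiching $\mathbb{D}^n(w,c_1\eta(w))\subset B_\Omega(w,s)\subset\mathbb{D}^n(w,c_2\eta(w))$ yields $V(B_\Omega(w,s))\asymp\prod_{i=1}^n\eta_i(w)^2$ with constants depending only on $n$ and $s$, so the claim reduces to the comparability $\eta_i(a_j)\asymp\eta_i(z)$ $(1\le i\le n)$ of the minimal-basis quantities at two points lying in a common Kobayashi ball of fixed radius. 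I would prove this near-constancy of $\eta(\cdot)$ on Kobayashi balls again from Lemma \ref{polydisks}, by pitting the inner polydisk at one centre against the outer polydisk at the other through the inclusions relating their Kobayashi balls; this is the only genuinely geometric input, and the place where $\bbC$-convexity (through Lemma \ref{polydisks}) is essential.

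Granting the comparison, the finite-overlap bound follows by a volume count. Because the balls $\{B_\Omega(a_j,\tfrac r2)\}_{j\in S}$ are pairwise disjoint and contained in $B_\Omega(z,R')$,
\[
\#S\cdot c\,V\bigl(B_\Omega(z,R')\bigr)\ \le\ \sum_{j\in S}V\bigl(B_\Omega(a_j,\tfrac r2)\bigr)\ \le\ V\bigl(B_\Omega(z,R')\bigr),
\]
whence $\#S\le 1/c=:m$, a bound depending only on $n$ and $r$ and independent of $z$. Since $z\in\Omega$ was arbitrary, every point of $\Omega$ lies in at most $m$ of the balls $B_\Omega(a_j,R)$, which is precisely (ii), completing the proof.
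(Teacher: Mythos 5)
Your proposal is correct in outline and reconstructs essentially the proof behind the citation: the paper does not prove this lemma itself but quotes \cite[Lemma 2.4]{Bi2024}, whose argument is exactly your scheme --- a maximal Kobayashi-separated family (covering by maximality, disjointness of the half-radius balls from $2\tau(\tfrac r2)<\tau(r)$, which indeed follows from strict convexity of $\tau=\operatorname{arctanh}$ with $\tau(0)=0$), followed by a volume count inside $B_\Omega(z,R')$. The housekeeping facts you invoke also hold: boundedness makes $\Omega$ Kobayashi hyperbolic, so $d_\Omega^K$ is a genuine distance and the triangle inequality is legitimate; boundedness also excludes complex lines, so Lemma \ref{polydisks} applies; and countability of the maximal family follows from disjointness of the balls $B_\Omega(a_j,\tfrac r2)$ plus separability. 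The one soft spot is precisely the step you flag as the main obstacle: the componentwise comparability $\eta_i(a_j)\asymp\eta_i(z)$, $1\le i\le n$, is \emph{stronger} than what the volume count needs, and your sketched route to it (inner polydisk at one centre against outer polydisk at the other) is not immediate, because the polydisks $\mathbb{D}^n(z,\cdot)$ and $\mathbb{D}^n(a_j,\cdot)$ in Lemma \ref{polydisks} are taken in the minimal bases at the respective centres, which need not be aligned; an inclusion of boxes in mutually rotated frames does not directly yield componentwise bounds on the radii without an additional linear-algebra argument (ordered widths, John ellipsoids, or the like). You should instead bypass this entirely with the product estimate already available in the paper: Lemma \ref{polydisks} gives $V(B_\Omega(w,s))\asymp p_\Omega(w)^2$ with constants depending only on $n$ and $s$, where $p_\Omega=\prod_{i=1}^n\eta_i$, and \eqref{jubukebi}, applied with radius $R$ since $a_j\in B_\Omega(z,R)$ for $j\in S$, gives $p_\Omega(a_j)\asymp p_\Omega(z)$ with constant depending only on $r$. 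This yields $V\bigl(B_\Omega(a_j,\tfrac r2)\bigr)\gtrsim V\bigl(B_\Omega(z,R')\bigr)$ with a constant depending only on $n$ and $r$, and your counting inequality then closes the proof exactly as written; with this substitution the argument is complete and matches the cited proof.
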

  The lower estimate for the Bergman kernel is given as follows.
 \begin{lemma}\cite[Lemma 2.4]{Zhang2025}\label{lower}
  Let \(\Omega \subset \mathbb{C}^n\) be a bounded \(\mathbb{C}\)-convex domain. Then for any \(r \in (0,1)\), \(z \in \Omega\) and \(w \in B_\Omega(z,r)\), we have 
\[
|K_\Omega(z,w)| \gtrsim K_\Omega(z,z)^{\frac{1}{2}} K_\Omega(w,w)^{\frac{1}{2}},
\]

where the constant in \(\gtrsim\) depends only on \(r\) and the domain \(\Omega\).
 \end{lemma}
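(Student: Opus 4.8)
The plan is to prove the matching lower bound by first disposing of everything except a near-diagonal oscillation estimate, and then upgrading that estimate to all radii using the \(\bbC\)-convex geometry. The companion upper bound is free: writing \(K_\zeta:=K_\Omega(\cdot,\zeta)\), the reproducing property gives \(K_\Omega(z,w)=\langle K_w,K_z\rangle\) and \(\|K_\zeta\|_{A^2}^2=K_\Omega(\zeta,\zeta)\), so Cauchy--Schwarz yields \(|K_\Omega(z,w)|\le K_\Omega(z,z)^{1/2}K_\Omega(w,w)^{1/2}\). The first genuine reduction I would make is the comparability of the diagonal across a Kobayashi ball: for \(w\in B_\Omega(z,r)\) one has \(K_\Omega(w,w)\asymp K_\Omega(z,z)\), with constants depending only on \(r\) and \(\Omega\). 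This follows from the two-sided estimate \(K_\Omega(z,z)\asymp V(B_\Omega(z,r))^{-1}\) — the upper half coming from Lemma \ref{submean} applied to \(|f|^2\) for \(f\in A^2\), the lower half from a test function supported on the inner polydisk of Lemma \ref{polydisks} — together with \(V(B_\Omega(w,r))\asymp V(B_\Omega(z,r))\) for \(w\) in the ball. Granting this, and introducing the normalized reproducing kernel \(k_z:=K_\Omega(\cdot,z)/K_\Omega(z,z)^{1/2}\) (so that \(k_z(z)=K_\Omega(z,z)^{1/2}\) and \(\|k_z\|_{A^2}=1\)), the target lower bound becomes equivalent to the assertion that \(|k_z(w)|\gtrsim K_\Omega(z,z)^{1/2}\) throughout \(B_\Omega(z,r)\).

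For \(w\) near \(z\) I would control the oscillation of \(k_z\) by a Cauchy estimate carried to the polydisk. By Lemma \ref{polydisks} the polydisk \(\bbD^n(z,c_1\eta(z))\) lies inside \(B_\Omega(z,r)\) and \(B_\Omega(z,r)\subset\bbD^n(z,c_2\eta(z))\); since \(|k_z|^2\) is plurisubharmonic, Lemma \ref{submean} gives \(\sup_{B_\Omega(z,r')}|k_z|^2\lesssim K_\Omega(z,z)\) on a slightly larger ball. Cauchy's inequality on the coordinate polydisk then yields \(|\partial_j k_z(\zeta)|\lesssim \eta_j(\zeta)^{-1}\sup|k_z|\), and integrating along the segment from \(z\) to \(w\), whose \(j\)-th coordinate increment is \(\lesssim\eta_j(z)\) by Lemma \ref{polydisks}, produces \(|k_z(w)-k_z(z)|\lesssim\theta\,K_\Omega(z,z)^{1/2}\) with \(\theta=\theta(r)\to0\) as \(r\to0\). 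Hence there is a threshold \(r_0\) such that for \(r\le r_0\) one gets \(|k_z(w)|\ge\tfrac12K_\Omega(z,z)^{1/2}\), proving the lemma in the small-radius regime.

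The main obstacle is to pass from small radius to an arbitrary fixed \(r\in(0,1)\). For large Kobayashi balls the oscillation bound above exceeds \(K_\Omega(z,z)^{1/2}\), so the subtraction fails and one must genuinely exclude a deep dip — indeed a zero — of \(K_\Omega(\cdot,z)\) inside \(B_\Omega(z,r)\). I stress that this cannot be done by soft metric chaining through the Skwarczy\'nski-type quantity \(\beta(z,w):=\left(1-\frac{|K_\Omega(z,w)|^2}{K_\Omega(z,z)K_\Omega(w,w)}\right)^{1/2}\): although \(\beta\) is an honest distance, summing its triangle inequality along a chain need not keep the result below \(1\), which is exactly the quantity one must bound. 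Instead I would exploit the \(\bbC\)-convex structure directly. Via the sandwich of Lemma \ref{polydisks}, the Bergman kernel \(K_\Omega\) localizes on \(B_\Omega(z,r)\) to the Bergman kernel of the comparable polydisk \(\bbD^n(z,c\eta(z))\), whose kernel is an explicit non-vanishing product satisfying \(|K_{\bbD^n}(z,w)|\asymp K_{\bbD^n}(z,z)\) on any fixed sub-polydisk. Transferring this comparison quantitatively — through \(K_\Omega(z,z)\asymp V(B_\Omega(z,r))^{-1}\) and the localization principle for the Bergman kernel on bounded \(\bbC\)-convex domains — gives \(|K_\Omega(z,w)|\gtrsim K_\Omega(z,z)\) for every \(w\in B_\Omega(z,r)\), and the symmetric form in the statement then follows from the diagonal comparability established first. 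Making this localization uniform in \(z\) is the technical heart of the proof.
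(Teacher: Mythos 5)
The paper itself offers no proof of Lemma \ref{lower}: it is imported verbatim from \cite{Zhang2025}, so your proposal has to stand on its own. Its preliminary layers do stand. The Cauchy--Schwarz upper bound is correct (though not part of the statement), the diagonal comparability \(K_\Omega(w,w)\asymp K_\Omega(z,z)\) on \(B_\Omega(z,r)\) follows from Lemma \ref{diagonal} together with \eqref{jubukebi}, and the small-radius oscillation argument is essentially sound, modulo two fixable points: Lemma \ref{polydisks} as quoted says only that \(c_1,c_2\) depend on \(n\) and \(r\), whereas your threshold argument needs \(c_2(r)\to 0\) as \(r\to 0\) (true on \(\bbC\)-convex domains via the Kobayashi metric estimate \(\kappa_\Omega(z;e_j)\asymp \eta_j(z)^{-1}\), but it must be said); and the Cauchy estimate should be run in the fixed minimal basis at \(z\), since \(\eta_j(\zeta)\) is defined relative to a minimal basis at \(\zeta\) that may rotate as \(\zeta\) moves, and \eqref{jubukebi} compares only the products \(p_\Omega\), not the individual factors \(\eta_j\).

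The genuine gap is exactly the step you yourself flag as the ``technical heart'': passing from \(r\le r_0\) to arbitrary \(r\in(0,1)\) via a ``localization principle'' that would replace \(K_\Omega\) on \(B_\Omega(z,r)\) by the kernel of the comparable polydisk. No such principle exists in the form you invoke. The sandwich \(\bbD^n(z,c_1\eta(z))\subset B_\Omega(z,r)\subset \bbD^n(z,c_2\eta(z))\) controls \emph{diagonal} kernel values, because \(K_\Omega(z,z)=\sup\{|f(z)|^2:\|f\|_{A^2}\le 1\}\) is monotone under domain inclusion; off-diagonal values enjoy no such monotonicity and can vanish outright --- an annulus is sandwiched between two disks whose kernels are zero-free, yet its Bergman kernel has zeros. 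Actual localization theorems compare \(K_\Omega\) with \(K_{\Omega\cap U}\) near a boundary point, not with an inscribed model domain, and they do not deliver the uniform two-sided off-diagonal comparability you need. So the assertion \(|K_\Omega(z,w)|\gtrsim K_\Omega(z,z)\) on all of \(B_\Omega(z,r)\) --- which, after your correct reductions, \emph{is} the lemma --- is assumed rather than proved; excluding zeros or deep dips of \(K_\Omega(\cdot,z)\) on large Kobayashi balls is precisely where the cited source must exploit \(\bbC\)-convexity in earnest (your accurate remark that Skwarczy\'nski-distance chaining cannot close the gap shows you saw the obstruction). As written, the proposal establishes the lemma only for \(r\) below an unspecified threshold \(r_0\).
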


For \(z\in \Omega\) and \(r\in (0,1)\). Let  \(p_\Omega(z)=\prod\limits_{i=1}^n\eta_i(z)\), then by \cite{Zhang2025}, for some positive constant \(C=C(r)\) we have 
\begin{equation}\label{jubukebi}
 \frac{1}{C} p_\Omega(z)\leq  p_\Omega(w)\leq Cp_\Omega(z),\quad w\in B_\Omega(z,r).
\end{equation}

By \eqref{jubukebi}, the following estimate for Bergman kernel  on diagonal is given.
\begin{lemma}\cite[Lemma 2.5]{Zhang2025}\label{diagonal}
   Let \(\Omega \subset \mathbb{C}^n\) be a \(\mathbb{C}\)-convex domain containing no complex lines. Then
\[
K_\Omega(z,z) \asymp \rho_\Omega(z)^{-2}, \quad z \in \Omega.\]
\end{lemma}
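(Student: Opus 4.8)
The plan is to prove the two-sided estimate by reducing it to a comparison with the volume of a Kobayashi ball and then identifying that volume with $p_\Omega(z)^2$; here I read the right-hand side $\rho_\Omega(z)^{-2}$ of the statement as $p_\Omega(z)^{-2}$ with $p_\Omega(z)=\prod_{i=1}^n\eta_i(z)$, consistent with \eqref{jubukebi}. First I would record the \emph{volume comparison}. By Lemma \ref{polydisks} the ball $B_\Omega(z,r)$ is squeezed between the polydiscs $\mathbb{D}^n(z,c_1\eta(z))$ and $\mathbb{D}^n(z,c_2\eta(z))$, whose Euclidean volumes are $(\pi c_i^2)^n\prod_{i=1}^n\eta_i(z)^2=(\pi c_i^2)^n\,p_\Omega(z)^2$; hence
\[
V(B_\Omega(z,r))\asymp p_\Omega(z)^2 .
\]
Thus it suffices to establish $K_\Omega(z,z)\asymp V(B_\Omega(z,r))^{-1}$, which I will do with matching upper and lower bounds.

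For the \emph{upper bound} I would use the extremal characterization $K_\Omega(z,z)=\sup\{|f(z)|^2:\ f\in A^2,\ \|f\|_{A^2}=1\}$. Since $|f|^2$ is a nonnegative plurisubharmonic function, Lemma \ref{submean} (with $R=\tfrac12(1+r)$) gives
\[
|f(z)|^2\lesssim \frac{1}{V(B_\Omega(z,r))}\int_{B_\Omega(z,R)}|f|^2\,dV\le \frac{\|f\|_{A^2}^2}{V(B_\Omega(z,r))}.
\]
Taking the supremum over unit-norm $f$ yields $K_\Omega(z,z)\lesssim V(B_\Omega(z,r))^{-1}\asymp p_\Omega(z)^{-2}$. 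Equivalently, since the inner polydisc $P:=\mathbb{D}^n(z,c_1\eta(z))\subset\Omega$, domain monotonicity of the Bergman kernel gives $K_\Omega(z,z)\le K_P(z,z)=\pi^{-n}\prod_i(c_1\eta_i(z))^{-2}\asymp p_\Omega(z)^{-2}$.

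The \emph{lower bound} $K_\Omega(z,z)\gtrsim p_\Omega(z)^{-2}$ is the crux, and here the difficulty is that the outer polydisc $\mathbb{D}^n(z,c_2\eta(z))$ need not contain $\Omega$, so the monotonicity argument cannot be reversed directly. My plan is instead to construct, for each $z$, a test function $f_z\in A^2(\Omega)$ with $f_z(z)=1$ and $\|f_z\|_{A^2}^2\lesssim V(B_\Omega(z,R))\asymp p_\Omega(z)^2$; then
\[
K_\Omega(z,z)=\Big(\inf\{\|g\|_{A^2}^2:\ g(z)=1\}\Big)^{-1}\ge \|f_z\|_{A^2}^{-2}\gtrsim p_\Omega(z)^{-2}.
\]
To build $f_z$ I would take a smooth cutoff $\chi$ equal to $1$ near $z$ and supported in $B_\Omega(z,R)$, adapted to the anisotropic scales $\eta_1(z),\dots,\eta_n(z)$ of Lemma \ref{polydisks}, and then solve $\bar\partial u=\bar\partial\chi$ by H\"ormander's $L^2$-estimate against a plurisubharmonic weight chosen to force $u(z)=0$, setting $f_z=\chi-u$. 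The \textbf{main obstacle} is precisely the anisotropy: the $e_j$-derivative of $\chi$ is of size $\eta_j(z)^{-1}$, so the naive $L^2$-norm of $\bar\partial\chi$ does not match $p_\Omega(z)^2$, and one must exploit the minimal-basis geometry and the $\mathbb{C}$-convexity (the supporting complex hyperplanes through the boundary points $q_1,\dots,q_n$ underlying Lemma \ref{polydisks}) to choose a weight whose complex Hessian reproduces the correct polydisc scales; equivalently one may iterate the Ohsawa--Takegoshi extension through the nested slices $\Omega\cap H_j$, gaining the transverse factor $\eta_j(z)^{-2}$ at each step. Combining the two bounds with the volume comparison then yields $K_\Omega(z,z)\asymp p_\Omega(z)^{-2}$.
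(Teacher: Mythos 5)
Your reading of the misprinted \( \rho_\Omega(z)^{-2} \) as \( p_\Omega(z)^{-2}=\prod_{j=1}^n\eta_j(z)^{-2} \) is correct (this is exactly how the lemma is used later, e.g.\ in the proof of Theorem \ref{atomic} and in \eqref{sanjiaobudengshi}), and two of your three pieces are complete: the volume comparison \( V(B_\Omega(z,r))\asymp p_\Omega(z)^2 \) from Lemma \ref{polydisks}, and the upper bound \( K_\Omega(z,z)\lesssim p_\Omega(z)^{-2} \) via Lemma \ref{submean} and the extremal characterization (or, equivalently, monotonicity against the inscribed polydisc). Note that the paper itself contains no proof of this statement --- it is quoted from \cite[Lemma 2.5]{Zhang2025}, which in turn rests on Nikolov--Pflug--Zwonek-type Bergman kernel estimates for \( \mathbb{C} \)-convex domains --- so the comparison here is with that literature rather than with anything in the present text.

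The genuine gap is the lower bound \( K_\Omega(z,z)\gtrsim p_\Omega(z)^{-2} \), which you correctly identify as the crux but leave as a plan rather than a proof, and the plan as sketched does not close. Plain Ohsawa--Takegoshi extension through a hyperplane slice \( \Omega\cap H_j \) costs a factor comparable to the square of the \emph{global} transverse width of \( \Omega \) (at worst \( \operatorname{diam}(\Omega)^2 \)), not \( \eta_j(z)^2 \); iterating through the nested slices therefore yields only \( K_\Omega(z,z)\gtrsim \eta_n(z)^{-2} \) up to constants, losing every transverse factor \( \eta_j(z)^{-2} \) with \( j<n \), and the H\"ormander-weight alternative faces the anisotropy obstacle you yourself flag without resolving. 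The missing ingredient --- precisely where \( \mathbb{C} \)-convexity enters in the known argument --- is that \( \mathbb{C} \)-convexity implies weak linear convexity, so through each minimal-basis boundary point \( q_j\in\partial\Omega \) there is a complex hyperplane \( \{\ell_j=0\} \) disjoint from \( \Omega \); the functions \( w\mapsto \ell_j(z)/\ell_j(w) \) are then holomorphic and bounded on \( \Omega \), of size \( 1 \) at \( z \) with \( |\ell_j(z)|\asymp\eta_j(z) \), and suitable powers of their product furnish a test function \( f_z \) with \( f_z(z)=1 \) and \( \|f_z\|_{A^2}^2\lesssim p_\Omega(z)^2 \), which is what your duality formula needs. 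Even this requires verifying quantitative transversality of the hyperplane through \( q_j \) to the direction \( e_j \), which depends on the minimal-basis geometry. Without constructing these reciprocal linear functionals (or an explicit plurisubharmonic weight with the correct anisotropic Hessian), the lower bound, and hence the lemma, is not established.
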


\begin{rem}
 By \eqref{jubukebi} and Lemma \ref{diagonal}, for \(w\in B_\Omega(z,r)\),
we have 
\begin{equation}\label{lower2}  
  |K_\Omega(z,w)|\gtrsim K_\Omega(z,z).
\end{equation} 
\end{rem}

\subsection{Bergman kernel with sharp \(\cB\) type conditions}\label{sharpb}
 
\hfil

In this subsection, we introduce a sharp \(\cB\) type condition for Bergman kernel, we firstly recall that the definition of {\(\cB\)- systems}.  

Let $\Omega$ be a bounded domain in $\mathbb{C}^n$. For $z \in \overline{\Omega}$ near the boundary $\partial \Omega$, a family of functions $\mathcal{B} = \{b_j(z,\cdot)\}_{j=1}^n$ is called a \textit{$\mathcal{B}$-system} at $z$ if there exist a neighborhood $U$ of $z$ and a positive integer $m \geq 2$ such that for all $w \in U$,
\[
b_1(z,w) := \frac{1}{\delta(z,w)} \quad \text{and} \quad b_j(z,w) := \sum_{k=2}^{m} \left( \frac{A_{jk}(z)}{\delta(z,w)} \right)^{\frac{1}{k}}, \quad  j = 2, \ldots, n,
\]
where $A_{jk}$ are non-negative bounded functions, and $\delta(z,w)$ is the pseudo-distance between $z$ and $w$ defined as follows:
\[
\delta(z,w) := |\delta_{\Omega}(z)| + |\delta_{\Omega}(w)| + |z_1 - w_1| + \sum_{l=2}^n \sum_{s=2}^m A_{ls}(z) |z_l - w_l|^s.
\]

For \(z\in\overline{\Omega}\) near the boundary, the Bergman kernel $K_\Omega$ is said to be of $\mathcal{B}$-type at $z \in \overline{\Omega}$ near the boundary $\partial \Omega$ if there exist positive constants $c$ and $C$ such that for all $w \in \Omega \cap \mathbb{B}(z, c)$,
\[
|K_\Omega(z, w)| \leq C \prod_{j=1}^n b_j^2(z, w).
\]
The Bergman kernel $K_\Omega$ is said to be of sharp $\mathcal{B}$ type at $z $ if $K_\Omega$ is of $\mathcal{B}$-type and has the sharp lower-bound on diagonal, i.e.
\[
K_\Omega(z, z) \asymp \prod_{j=1}^n b_j^2(z, z).\]


We finish this section with a Forelli-Rudin estimate on domains with Bergman kernel satisfying the sharp \(\cB\) type condition, which will be used to construct an atomic type decomposition.

\begin{lemma}\label{Rudin}\cite[Proposition 2.7]{Zhang2025}
  Let $\Omega \subset \mathbb{C}^n$ be a bounded domain whose Bergman kernel $K_\Omega$ is of sharp $\mathcal{B}$ type. Assume that $\alpha \leq 0, \, \beta \geq 0, \, t + \alpha \geq 1$ and $-1 < \beta - 2\alpha < 2t - 2$. Then the following inequality
\[
\int_{\Omega} |K_\Omega(z,w)|^t K_\Omega(w,w)^\alpha d_\Omega(w)^\beta dV(w) \lesssim K_\Omega(z,z)^{t+\alpha-1} d_\Omega(z)^\beta
\]
holds for every $z \in \Omega$.
\end{lemma}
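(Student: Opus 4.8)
The statement is a Forelli–Rudin type estimate, so my plan is to extract decay of the integrand from the sharp $\cB$ type structure and reduce everything to a geometric series over the pseudo-balls attached to the pseudo-distance $\delta(z,w)$. Since the inequality is trivial when $z$ ranges over a fixed compact interior subset (there $K_\Omega(z,\cdot)$, $K_\Omega(\cdot,\cdot)$ and $\delta_\Omega$ are comparable to positive constants), I would assume $z$ lies near $\partial\Omega$ and fix a $\cB$-system $\{b_j(z,\cdot)\}_{j=1}^n$ on a neighbourhood $\bbB(z,c)$. I then split
\[
\int_\Omega |K_\Omega(z,w)|^t K_\Omega(w,w)^\alpha \delta_\Omega(w)^\beta\,dV(w)=\int_{\Omega\cap\bbB(z,c)}+\int_{\Omega\setminus\bbB(z,c)} ,
\]
the second piece being a lower-order term: there $K_\Omega(w,w)^\alpha\lesssim 1$ (as $\alpha\le 0$ and $K_\Omega(\cdot,\cdot)\ge 1/V(\Omega)>0$) and $\delta_\Omega(w)^\beta\lesssim 1$, so it is governed by an off-diagonal integral of $|K_\Omega(z,w)|^t$, which one controls by a standard off-diagonal decay estimate for the Bergman kernel and shows is dominated by $K_\Omega(z,z)^{t+\alpha-1}\delta_\Omega(z)^\beta$ (this quantity blowing up as $z\to\partial\Omega$ under the stated exponent conditions). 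The whole problem is thereby reduced to the near piece.

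On the near piece I would feed in the sharp $\cB$ type hypothesis twice: the upper bound $|K_\Omega(z,w)|\le C\prod_{j=1}^n b_j^2(z,w)$ to handle $|K_\Omega(z,w)|^t$, and the two-sided relation $K_\Omega(w,w)\asymp\prod_{j=1}^n b_j^2(w,w)$ to handle $K_\Omega(w,w)^\alpha$ (since $\alpha\le 0$ this forces use of the sharp \emph{lower} bound for $K_\Omega(w,w)$, precisely what sharp $\cB$ type supplies). Writing $b_1^2(z,w)=\delta(z,w)^{-2}$ and $b_j^2(z,w)\asymp\max_{2\le k\le m}(A_{jk}(z)/\delta(z,w))^{2/k}$ for $j\ge 2$, and using the regularity of the $\cB$-system (the coefficients $A_{jk}$ are comparable on $\bbB(z,c)$, and $\delta(w,w)=2\delta_\Omega(w)\le 2\,\delta(z,w)$), the integrand becomes comparable to
\[
\Big(\prod_{j=1}^n b_j^2(z,w)\Big)^t\Big(\prod_{j=1}^n b_j^2(w,w)\Big)^\alpha \delta_\Omega(w)^\beta ,
\]
now expressed entirely through $\delta(z,w)$, $\delta_\Omega(w)$, the frozen coefficients $A_{jk}(z)$, and the coordinate differences $|z_l-w_l|$.

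Next I introduce the pseudo-balls $E_\tau=\{w:\delta(z,w)<\tau\}$. A direct volume computation gives $V(E_\tau)\asymp\tau^2\prod_{l=2}^n\rho_l(\tau)^2$ with $\rho_l(\tau)=\min_{2\le s\le m}(\tau/A_{ls}(z))^{1/s}$, whence $\prod_{j=1}^n b_j^2(z,w)\asymp V\!\big(E_{\delta(z,w)}\big)^{-1}$ and, by the sharp lower bound together with Lemma \ref{diagonal} and \eqref{jubukebi}, $K_\Omega(z,z)\asymp\prod_j b_j^2(z,z)\asymp V\!\big(E_{\delta_\Omega(z)}\big)^{-1}$. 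I then decompose the near piece into the dyadic shells $\{w:2^{k}\delta_\Omega(z)\le\delta(z,w)<2^{k+1}\delta_\Omega(z)\}$, $k=0,1,2,\dots$, on each of which $\delta(z,w)$ and hence the integrand is essentially constant while the Euclidean volume is $\asymp V(E_{2^k\delta_\Omega(z)})$. Summing the resulting geometric series in $k$, the lower hypothesis $-1<\beta-2\alpha$ secures convergence at the innermost (diagonal) shell $\tau\downarrow\delta_\Omega(z)$, while $\beta-2\alpha<2t-2$ secures convergence of the series at the outer end; the dominant term is the innermost shell, and an exponent count (which one can verify against the model case $\Omega=\bbB^n$, $m=2$) shows it reproduces exactly $K_\Omega(z,z)^{t+\alpha-1}\delta_\Omega(z)^\beta$.

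The main obstacle is the anisotropic, \emph{non-homogeneous} nature of $\delta(z,w)$: because each $b_j^2(z,w)$ is a maximum over the finitely many exponents $s=2,\dots,m$, the shells $E_\tau$ are not self-similar, so the volume formula $V(E_\tau)\asymp\tau^2\prod_l\rho_l(\tau)^2$ must be established by further subdividing each tangential coordinate according to which power $s$ realises the minimum in $\rho_l(\tau)$, and the exponent bookkeeping on each sub-region must be shown to converge precisely under $-1<\beta-2\alpha<2t-2$ with a constant uniform in $z$. Two subsidiary technical points also need care: controlling the $w$-centred factor $K_\Omega(w,w)^\alpha$ uniformly across a shell (via \eqref{jubukebi} and the slow variation of the $A_{jk}$), and making the far-region estimate quantitative. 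An alternative that sidesteps part of the anisotropic integration is to discretise: cover $\Omega$ by an $r$-lattice (Lemma \ref{cover}), use $K_\Omega(w,w)\asymp K_\Omega(a_j,a_j)$, $\delta_\Omega(w)\asymp\delta_\Omega(a_j)$ and $V(B_\Omega(a_j,r))\asymp K_\Omega(a_j,a_j)^{-1}$ on each ball to reduce the integral to $\sum_j|K_\Omega(z,a_j)|^t K_\Omega(a_j,a_j)^{\alpha-1}\delta_\Omega(a_j)^\beta$, and then group lattice points into dyadic shells of $\delta(z,a_j)$; this trades the volume computation for a lattice-counting estimate of comparable difficulty.
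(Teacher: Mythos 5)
A preliminary remark: the paper itself does not prove Lemma \ref{Rudin} --- it imports it verbatim from \cite[Proposition 2.7]{Zhang2025} --- so your attempt can only be measured against the argument in that reference, whose strategy (near/far splitting at the scale \(c\) of the \(\cB\)-system, anisotropic pseudo-balls, dyadic shells) is indeed the one you outline. The architecture is right, but two of your steps are not proofs. First, the far region. You dismiss \(\int_{\Omega\setminus\bbB(z,c)}\) as a lower-order term controlled by ``a standard off-diagonal decay estimate for the Bergman kernel''. No such estimate exists at the stated level of generality: the lemma assumes only a \emph{bounded domain} with kernel of sharp \(\cB\) type, and that hypothesis controls \(|K_\Omega(z,w)|\) only for \(w\in\Omega\cap\bbB(z,c)\). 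Off that neighbourhood the only universally valid bound is Cauchy--Schwarz, \(|K_\Omega(z,w)|\le K_\Omega(z,z)^{1/2}K_\Omega(w,w)^{1/2}\), which has no decay and is quantitatively insufficient: it bounds the far piece by \(K_\Omega(z,z)^{t/2}\int_\Omega K_\Omega(w,w)^{t/2+\alpha}\delta_\Omega(w)^\beta\,dV(w)\), and already for \(\Omega=\bbD\), \(\alpha=0\), \(t=3\), \(\beta=2\) (admissible parameters) that integral diverges. Off-diagonal control of the kernel is a substantive part of any complete proof here, not a citation to folklore.

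Second, on the near piece your shell bookkeeping contains an error that changes the answer. On the shell \(\{2^k\delta_\Omega(z)\le\delta(z,w)<2^{k+1}\delta_\Omega(z)\}\) the factor \(|K_\Omega(z,w)|^t\) is indeed essentially constant, but \(K_\Omega(w,w)^\alpha\delta_\Omega(w)^\beta\) is not: within one shell \(\delta_\Omega(w)\) sweeps all of \((0,2^{k+1}\delta_\Omega(z))\). If one does what your sketch literally says and replaces these factors by their suprema over the shell (\(K_\Omega(w,w)^\alpha\le V(\Omega)^{-\alpha}\) and \(\delta_\Omega(w)^\beta\lesssim(2^k\delta_\Omega(z))^\beta\)), the geometric series still converges (since \(\beta\le\beta-2\alpha<2t-2\)), but it yields only the bound \(\lesssim K_\Omega(z,z)^{t-1}\delta_\Omega(z)^\beta\), which misses the target \(K_\Omega(z,z)^{t+\alpha-1}\delta_\Omega(z)^\beta\) by the unbounded factor \(K_\Omega(z,z)^{-\alpha}\) whenever \(\alpha<0\). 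To recover the exponent \(t+\alpha-1\) you must genuinely integrate \(K_\Omega(w,w)^\alpha\delta_\Omega(w)^\beta\) in the normal direction across each shell --- this, and not the ``innermost shell'' (where nothing is singular, as \(\delta(z,w)\ge\delta_\Omega(z)>0\)), is where \(-1<\beta-2\alpha\) enters, and with \(\alpha\le0\le\beta\) it is in fact automatic --- and that in turn forces you to express \(K_\Omega(w,w)\asymp\prod_jb_j^2(w,w)\), whose data are frozen at \(w\), through the \(\cB\)-system frozen at \(z\). Your justification, that ``the coefficients \(A_{jk}\) are comparable on \(\bbB(z,c)\)'', is not part of the definition of a \(\cB\)-system given in this paper (the \(A_{jk}\) are merely nonnegative bounded functions), so it is an unproved regularity hypothesis doing essential work. (A smaller issue of the same kind: Lemma \ref{diagonal} and \eqref{jubukebi}, which you invoke, are proved for \(\bbC\)-convex domains, whereas Lemma \ref{Rudin} concerns arbitrary bounded domains of sharp \(\cB\) type; you should instead use \(K_\Omega(z,z)\asymp\prod_jb_j^2(z,z)\) directly.) In short, the proposal identifies the right strategy but leaves the two analytically hardest steps --- the off-diagonal estimate and the \(w\)-frozen versus \(z\)-frozen comparison --- unproved.
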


\section{Proof of main result}\label{section3}
In this section, we give the proof of Theorem \ref{mainth}.
In order to prove theorem \ref{mainth}, we need to construct atomic type decomposition of \(A_\alpha^p\),  which is stated as follows.
\begin{theorem}\label{atomic}
  Let \(r\in(0,1)\) be fixed, and \(\{w_k\}\) be an \(r\)-lattice. For \(0<p<\infty\) and \(\{c_k\}\in l^p\), \(\alpha+Np>1\). Set \(f(z)= \sum\limits_{k=1}^\infty c_k K_\Omega(w_k,w_k)^{\frac{1-\alpha}{p}-N}K_\Omega(z,w_k)^N\), 
 then we have  \[\|f\|_{A_\alpha^p}\lesssim \|c_k\|_{l^p}\asymp 1.\]
\end{theorem}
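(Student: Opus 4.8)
The plan is to estimate the weighted integral $\|f\|_{A_\alpha^p}^p=\int_\Omega|f(z)|^pK_\Omega(z,z)^\alpha\,dV(z)$ directly, treating the two regimes $0<p\le1$ and $1<p<\infty$ separately. Throughout I would use the conjugate symmetry $|K_\Omega(z,w)|=|K_\Omega(w,z)|$, the diagonal comparison $K_\Omega(w,w)\asymp p_\Omega(w)^{-2}\asymp V(B_\Omega(w,r))^{-1}$ coming from Lemma \ref{diagonal} and Lemma \ref{polydisks}, and, as the workhorse, the Forelli--Rudin estimate of Lemma \ref{Rudin}. Writing $d_k:=K_\Omega(w_k,w_k)$, so that the $k$-th term of the series is $c_k\,d_k^{(1-\alpha)/p-N}K_\Omega(z,w_k)^N$, the goal is to show $\int_\Omega|f|^pK_\Omega(z,z)^\alpha\,dV\lesssim\sum_k|c_k|^p=\|c_k\|_{l^p}^p$; convergence of the series (hence holomorphy of $f$, so that indeed $f\in A_\alpha^p$) follows from the same estimate applied to partial sums.

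For $0<p\le1$ the argument is short. By the elementary inequality $|\sum_k t_k|^p\le\sum_k|t_k|^p$ one gets $|f(z)|^p\le\sum_k|c_k|^pd_k^{(1-\alpha)-Np}|K_\Omega(z,w_k)|^{Np}$, and integrating termwise against $K_\Omega(z,z)^\alpha$ reduces matters to $\int_\Omega|K_\Omega(w_k,z)|^{Np}K_\Omega(z,z)^\alpha\,dV(z)$. Applying Lemma \ref{Rudin} with kernel exponent $Np$, weight exponent $\alpha$ and $\beta=0$ bounds this by $d_k^{Np+\alpha-1}$; the hypotheses of that lemma reduce precisely to $\alpha\le0$ and $\alpha+Np>1$, which are exactly the standing assumptions. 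The surviving powers combine as $d_k^{(1-\alpha)-Np}\cdot d_k^{Np+\alpha-1}=d_k^{0}$ and cancel, leaving $\sum_k|c_k|^p$.

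For $1<p<\infty$ subadditivity fails and I would run a Schur-type argument. With conjugate exponent $p'=p/(p-1)$, split $|K_\Omega(z,w_k)|^N$ across the pair $(p,p')$ with a free parameter, so that Hölder's inequality gives $|f(z)|\le(\sum_k|c_k|^p|K_\Omega(z,w_k)|^Nd_k^{s})^{1/p}\,G(z)^{1/p'}$, where $G(z)=\sum_k|K_\Omega(z,w_k)|^Nd_k^{t}$ and $s,t$ are tied by the one linear relation forced by matching the exponent of $d_k$. The discrete sum $G(z)$ is first passed to a continuous integral: since $w\mapsto|K_\Omega(z,w)|^N$ is nonnegative plurisubharmonic, Lemma \ref{submean} bounds each summand by an average over $B_\Omega(w_k,R)$, and the finite-overlap property of Lemma \ref{cover} together with $V(B_\Omega(w_k,r))\asymp d_k^{-1}$ converts $\sum_k$ into a single integral $\int_\Omega|K_\Omega(z,w)|^NK_\Omega(w,w)^{t+1}\,dV(w)$, which Lemma \ref{Rudin} bounds by $K_\Omega(z,z)^\nu$ for a computable $\nu$. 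Substituting $G(z)^{1/p'}\lesssim K_\Omega(z,z)^{\nu(p-1)/p}$ back, raising to the $p$-th power, integrating against $K_\Omega(z,z)^\alpha$, and invoking Lemma \ref{Rudin} a second time (now in the $z$-variable, using the symmetry of $|K_\Omega|$) once more collapses all powers of $d_k$ to $d_k^{0}$, producing $\sum_k|c_k|^p$.

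The main obstacle is the bookkeeping in the case $p>1$: one must choose the Hölder parameter $t$ so that \emph{both} invocations of Lemma \ref{Rudin} meet all four of its hypotheses ($\alpha\le0$, $\beta\ge0$, $t+\alpha\ge1$, $-1<\beta-2\alpha<2t-2$) simultaneously. Each application confines the admissible $t$ to an interval depending on $\alpha$, $N$ and $p$, while the linear relation between $s$ and $t$ is exactly what forces the two $d_k$-exponents to cancel; the delicate point is verifying that these intervals overlap under the standing hypotheses (taking $N$ large if necessary). A secondary care point is the discretization step, where the plurisubharmonicity of $|K_\Omega(z,\cdot)|^N$ and the comparabilities \eqref{jubukebi} and Lemma \ref{diagonal} are needed to replace lattice values by integral averages without losing the implied constants.
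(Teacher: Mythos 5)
Your proposal is correct, and it splits into two parts relative to the paper: the case $0<p\le 1$ (subadditivity of $t\mapsto t^p$ plus one application of Lemma \ref{Rudin}, whose hypotheses reduce to $\alpha\le 0$ and $\alpha+Np>1$) is exactly the paper's argument; the case $1<p<\infty$ is a genuinely different organization of the same machinery. You run the Schur--H\"older splitting directly on the discrete sum, $|f(z)|\le\bigl(\sum_k|c_k|^p|K_\Omega(z,w_k)|^Nd_k^{s}\bigr)^{1/p}\bigl(\sum_k|K_\Omega(z,w_k)|^Nd_k^{t}\bigr)^{1/p'}$ with $d_k=K_\Omega(w_k,w_k)$ and $s+t(p-1)=1-\alpha-Np$, convert only the auxiliary sum to an integral via Lemma \ref{submean}, \eqref{jubukebi} and the finite overlap in Lemma \ref{cover}, and then apply Lemma \ref{Rudin} twice (once in $w$, once termwise in $z$); the exponent relation makes all powers of $d_k$ cancel, as you indicate. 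The paper instead introduces the step function $F=\sum_k|c_k|v_\alpha(D_k)^{-1/p}\chi_{D_k}$, proves the pointwise domination $|f|\lesssim S_{q,s,t}F$ with $q=0$, $s=N$, $t=1-N$ (using Lemma \ref{submean} in the reverse direction, to pass from ball averages back to lattice values), and then invokes its separate operator lemma (Lemma \ref{lemindex}, itself a Schur test built on two applications of Lemma \ref{Rudin}) for the $L^p_\alpha$-boundedness of $S_{q,s,t}$. Both proofs thus rest on the same three pillars --- Forelli--Rudin, sub-mean value, finite overlap --- but yours is the classical Luecking-style direct estimate and avoids the step-function detour and the operator lemma entirely, while the paper's factorization yields a reusable mapping result. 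One substantive remark: your flagged caveat about parameter bookkeeping is not cosmetic. Writing $u:=N+t$, your two Rudin applications require $0<u\le N-1$ and $1-N<u(p-1)+\alpha\le 0$, which is a nonempty range only when $\alpha<0$ and $N$ is sufficiently large (not merely $\alpha+Np>1$); the paper's own proof faces the identical constraint (its Schur weight $K_\Omega(\cdot,\cdot)^\sigma$ needs $0<\sigma$ and $p\sigma+\alpha\le 0$ simultaneously). So your argument closes under exactly the same conditions as the paper's, and the extra room needed in $N$ and the strictness of $\alpha<0$ are defects of the theorem's statement rather than of your proof.
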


The following lemma can be obtained by Lemma \ref{Rudin} and Schur's test, it can be used to construction of atomic decomposition in the proof of Theorem \ref{atomic}.
\begin{lemma}\label{lemindex} Let  \(\Omega\subset\bbC^n\) be a bounded pseudoconvex domain whose Bergman kernel \(K_\Omega(z,w)\) satisfies sharp \(\cB\) type condition. Assume that 
   \begin{equation}\label{index}
  \begin{cases}
    & q + p\sigma + \alpha \leq 0, \\
     & s + q + p\sigma + \alpha \geq 1 , \\
      & -1 < -2(q + p\sigma + \alpha) < 2s - 2 ,\\
       & t + p'\sigma \leq 0 ,\\ 
       & s + t + p'\sigma \geq 1, \\
        & -1 < -2(t + p'\sigma + \alpha) < 2s - 2 ,\\
        & q+s+t-1\leq 0.
  \end{cases}  
  \end{equation}
  Then the integral operator 
  \[f\mapsto T_{q,s,t}f(z)=K_\Omega(z,z)^q\int_\Omega K_\Omega(z,w)^s K_\Omega(w,w)^t f(w)dV(w)\] is bounded on \(L^p(dV_\alpha)\).  
\end{lemma}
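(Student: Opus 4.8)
The plan is to derive boundedness from the $L^p$ form of Schur's test, using a power of the diagonal Bergman kernel as the testing function and Lemma~\ref{Rudin} to evaluate the two resulting integrals. Since $L^p(dV_\alpha)$ carries the weight $dV_\alpha = K_\Omega(\cdot,\cdot)^\alpha\,dV$, I would first rewrite $dV(w) = K_\Omega(w,w)^{-\alpha}\,dV_\alpha(w)$ so that
\[
T_{q,s,t}f(z)=\int_\Omega \widetilde H(z,w)\,f(w)\,dV_\alpha(w),\qquad \widetilde H(z,w):=K_\Omega(z,z)^{q}K_\Omega(z,w)^{s}K_\Omega(w,w)^{t-\alpha}.
\]
Because $|K_\Omega(z,w)^{s}|=|K_\Omega(z,w)|^{s}$, it suffices to show the positive operator with kernel $|\widetilde H|$ is bounded on $L^p(dV_\alpha)$ for $1<p<\infty$, with conjugate exponent $p'$. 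For this I take the Schur testing function $h(z):=K_\Omega(z,z)^{\sigma}$.

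For the first Schur inequality I integrate $|\widetilde H|$ in $w$; the three diagonal factors collapse to a single exponent,
\[
\int_\Omega|\widetilde H(z,w)|\,h(w)^{p'}\,dV_\alpha(w)=K_\Omega(z,z)^{q}\int_\Omega|K_\Omega(z,w)|^{s}K_\Omega(w,w)^{t+p'\sigma}\,dV(w),
\]
which is a Forelli--Rudin integral (boundary-distance exponent $\beta=0$) to which Lemma~\ref{Rudin} applies; its admissibility hypotheses are exactly conditions (4)--(6) of \eqref{index}, and they yield the bound $K_\Omega(z,z)^{q+s+t+p'\sigma-1}$. For the second inequality I integrate in $z$ and use $|K_\Omega(z,w)|=|K_\Omega(w,z)|$,
\[
\int_\Omega|\widetilde H(z,w)|\,h(z)^{p}\,dV_\alpha(z)=K_\Omega(w,w)^{t-\alpha}\int_\Omega|K_\Omega(w,z)|^{s}K_\Omega(z,z)^{q+p\sigma+\alpha}\,dV(z),
\]
again a Forelli--Rudin integral whose admissibility hypotheses are precisely conditions (1)--(3); Lemma~\ref{Rudin} gives the bound $K_\Omega(w,w)^{q+s+t+p\sigma-1}$.

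Comparing these bounds with the targets $h(z)^{p'}=K_\Omega(z,z)^{p'\sigma}$ and $h(w)^{p}=K_\Omega(w,w)^{p\sigma}$, in both cases the leftover factor is $K_\Omega(\cdot,\cdot)^{q+s+t-1}$. Since $\Omega$ is bounded, Lemma~\ref{diagonal} gives $K_\Omega(z,z)\gtrsim1$, so a nonpositive power of the diagonal kernel is uniformly bounded; hence condition (7), $q+s+t-1\le0$, is exactly what is needed to close both Schur inequalities. With both estimates in hand, Schur's test yields the boundedness of $T_{q,s,t}$ on $L^p(dV_\alpha)$.

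The argument is essentially bookkeeping once the mechanism is fixed: the seven inequalities in \eqref{index} are arranged so that (1)--(3) and (4)--(6) match the two admissibility regimes of Lemma~\ref{Rudin} for the two Schur integrals, while (7) balances the residual diagonal power. I expect the only delicate points to be the correct placement of the weight $K_\Omega(\cdot,\cdot)^{\alpha}$ when passing between $dV$ and $dV_\alpha$ (which decides whether an $\alpha$ enters a given condition), and the appeal to the lower bound $K_\Omega\gtrsim1$---valid precisely because $\Omega$ is bounded---to absorb the leftover factor via condition (7).
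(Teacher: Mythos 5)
Your proposal is correct and follows essentially the same route as the paper: Schur's test with testing function $h=K_\Omega(\cdot,\cdot)^{\sigma}$, Lemma~\ref{Rudin} applied to the two Schur integrals (matching conditions (1)--(3) and (4)--(6) of \eqref{index}), and condition (7) together with $K_\Omega(z,z)\gtrsim 1$ on a bounded domain to absorb the leftover factor $K_\Omega(\cdot,\cdot)^{q+s+t-1}$. In fact your bookkeeping is slightly more careful than the paper's, which records the second Schur bound with a typographical slip (writing $K_\Omega(z,z)^{t+s+p\sigma-1}$ where $K_\Omega(w,w)^{q+s+t+p\sigma-1}$ is meant).
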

\begin{proof}
  To prove the boundedness of \(T_{q,s,t}\), we only prove the boundedness of \(S_{q,s,t}f(z)=K_\Omega(z,z)^q\int_\Omega |K_\Omega(z,w)|^s K_\Omega(w,w)^t f(w)dV(w)\).
  To do this, let \(h(z)=K_\Omega(z,z)^\sigma\),
  \begin{align*}
    &K_\Omega(z,z)^q\int_\Omega |K_\Omega(z,w)|^s K_\Omega(w,w)^tdV(w)\\
    &=K_\Omega(z,z)^q\int_\Omega |K_\Omega(z,w)|^s K_\Omega(w,w)^{t-\alpha}dV_\alpha(w).
  \end{align*}
  On the one hand, by Lemma \ref{Rudin} and 
  \begin{align*}
   &K_\Omega(z,z)^q\int_\Omega |K_\Omega(z,w)|^s K_\Omega(w,w)^{t-\alpha}K_\Omega(w,w)^{p'\sigma}dV_\alpha(w)\\
   &=K_\Omega(z,z)^q\int_\Omega |K_\Omega(z,w)|^s K_\Omega(w,w)^{t+p'\sigma}dV(w)\\
   &\lesssim K_\Omega(z,z)^{q+s+t-1+p'\sigma}\\
   &\leq C K(z,z)^{p'\sigma}=Ch(z)^{p'}.
  \end{align*}
  On the other hand, 
  \begin{align*}
    &\int_\Omega K_\Omega(z,z)^q |K_\Omega(z,w)|^sK_\Omega(w,w)^{t-\alpha}K_\Omega(z,z)^{p\sigma}dV_\alpha(z)\\
    &=K_\Omega(w,w)^{t-\alpha}\int_\Omega |K_\Omega(z,w)|^sK_\Omega(z,z)^{q+\alpha+p\sigma}dV(z)\\
    &\lesssim K_\Omega(z,z)^{t+s+p\sigma-1}\\
    &\leq C K_\Omega(w,w)^{p\sigma}\\
    &=Ch(w)^p.
  \end{align*}
  Applying Schur's test, we obtain  that \(S_{q,s,t}\) is bounded on \(L^p(dV_\alpha)\), the proof is completed.
\end{proof}
Now we give the proof of Theorem \ref{atomic}.
\begin{proof}[Proof of Theorem \ref{atomic}]
  It is obvious that the \(f\) is holomorphic.
  For \(Np+\alpha>1\), by Lemma \ref{Rudin}, we have 
  \begin{equation}\label{norm}
  \|K_\Omega(z,w_k)^N\|_{A_\alpha^p}\lesssim K_\Omega(w_k,w_k)^{\frac{\alpha-1}{p}+N}.
  \end{equation}
  For \(0<p\leq 1\), using the fact that 
  \[\left(\sum_{j=1}^\infty b_j\right)^p\leq \sum_{k=1}^\infty b_j^p\,\,\, \text{for}\,\,\, b_j\geq 0,\]
  \eqref{norm} yields that 
  \[\|f\|_{A_\alpha^p}^p\lesssim \sum_{k=0}^\infty |c_k|^pK_\Omega(w_k,w_k)^{1-\alpha-pN}\|K_\Omega(\cdot, w_k)^N\|_{A_\alpha^p}^p\lesssim \sum_{k=1}^\infty |c_k|^p.\]
  For \(1<p<\infty\), set $$
  \begin{cases}
    &D_k= B_\Omega(w_k, r),\\
    &F(z)=\sum_{k=0}^\infty |c_k|v_\alpha(D_k)^{-\frac{1}{p}}\chi_k(z),
  \end{cases}
  $$
  then we have
  \[\|F\|_{A_\alpha^p}^p\lesssim \|c_k\|_{l^p}^p.\]

  Let \(q=1-s-t, s=N, t=1-N\) in  \eqref{index}, then by lemma \ref{polydisks}, lemma \ref{lower} and \eqref{jubukebi}, we have 
    \begin{align*}
   & S_{q,s,t}F(z)\\
   &=K_\Omega(z,z)^{1-N-(1-N)}\int_\Omega |K_\Omega(z,w)|^N K_\Omega(w,w)^{1-N} \left(\sum_{k=0}^\infty |c_k|v_\alpha(D_k)^{-\frac{1}{p}}\chi_k(w)\right)dv(w)\\
    &=\sum_{k=0}^\infty |c_k|v_\alpha(D_k)^{-\frac{1}{p}}\int_{D_k}|K_\Omega(z,w)|^NK_\Omega(w,w)^Ndv(w)\\
    &\gtrsim \sum_{k=0}^\infty |c_k|K_\Omega(w_k,w_k)^{\frac{1-\alpha}{p}}\int_{D_k} |K_\Omega(z,w)|^NK_\Omega(w,w)^{1-N}dv(w)\\
    &\gtrsim\sum_{k=0}^\infty |c_k|K_\Omega(w_k,w_k)^{\frac{1-\alpha}{p}+1-N}\int_{D_k} |K_\Omega(z,w)|^Ndv(w).
  \end{align*}
 By Lemma \ref{submean}, 
  \begin{align*}
    |K_z(w_k)|^N&\lesssim\frac{1}{V(D_k)}\int_{D_k}|K_\Omega(z,w)|^Ndv(w)\\
&\lesssim p_\Omega(w_k,w_k)^{-2}\int_{D_k}|K_\Omega(z,w)|^Ndv(w)\\
&\asymp K_\Omega(w_k,w_k)\int_{D_k}|K_\Omega(z,w)|^Ndv(w),
  \end{align*}
  then 
  \begin{align*}
    S_{q,s,t}F(z)\gtrsim \sum_{k=0}^\infty |c_k|K_\Omega(w_k,w_k)^{\frac{1-\alpha}{p}-N}\gtrsim |f(z)|.
  \end{align*}
  Hence, lemma \ref{lemindex}   yields that
  \[\|f\|_{\alpha}^p\lesssim \|S_{q,s,t}F\|_{L_\alpha^p}^p\lesssim\|F\|_{L_\alpha^p}^p\lesssim \|c_k\|_{l^p}^p.\]
  The proof is completed. 
\end{proof}

In order to prove (iv) \(\Longleftrightarrow\) (v) in Theorem \ref{mainth}, we need the following results.
\begin{lemma}\label{wuguan}\cite[Lemma 5.3]{Zhang2025} 
    Let \( 1 < p < \infty \) and \( s \in \mathbb{R} \). Then the function \( p_\Omega^s \widehat{\mu}_r \in L^p(\Omega) \) for some \( r \in (0, 1) \) if and only if \( p_\Omega^s \widehat{\mu}_R \in L^p(\Omega) \) for all \( R \in (0, 1) \). Moreover, we have
    \[
    \| p_\Omega^s \widehat{\mu}_r \|_p \asymp \| p_\Omega^s \widehat{\mu}_R \|_p,
    \]
    where the constants in \( \asymp \) depend on \( r, R \in (0, 1) \).
\end{lemma}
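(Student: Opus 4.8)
The plan is to prove the single two-sided estimate $\|p_\Omega^s\widehat{\mu}_{r_1}\|_p\asymp\|p_\Omega^s\widehat{\mu}_{r_2}\|_p$ for every pair $r_1,r_2\in(0,1)$; since $\asymp$ is symmetric this yields both the equivalence of the ``for some'' and ``for all'' conditions and the comparability of norms, and I may assume $r_1<r_2$. Two facts will be used throughout. First, Lemma \ref{polydisks} gives $V(B_\Omega(z,t))\asymp p_\Omega(z)^2$ for every fixed $t\in(0,1)$ (the sandwiching polydiscs have volume a constant multiple of $p_\Omega(z)^2$), so Kobayashi balls of different radii with the same centre have comparable volumes. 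Second, \eqref{jubukebi} makes $p_\Omega^s$ comparable to a constant on each Kobayashi ball. I will also use that $d_\Omega^K$ is symmetric and obeys the triangle inequality, writing $\lambda(t)=\frac12\log\frac{1+t}{1-t}$ for the Kobayashi radius, so that $w\in B_\Omega(z,t)\iff d_\Omega^K(z,w)<\lambda(t)$.

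The easy inequality $\|p_\Omega^s\widehat{\mu}_{r_1}\|_p\lesssim\|p_\Omega^s\widehat{\mu}_{r_2}\|_p$ is pointwise: as $r_1<r_2$ we have $\mu(B_\Omega(z,r_1))\le\mu(B_\Omega(z,r_2))$ while $V(B_\Omega(z,r_1))\asymp p_\Omega(z)^2\asymp V(B_\Omega(z,r_2))$, so $\widehat{\mu}_{r_1}(z)\lesssim\widehat{\mu}_{r_2}(z)$ for all $z$; multiplying by $p_\Omega(z)^s$ and integrating gives the claim.

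The substantial direction is $\|p_\Omega^s\widehat{\mu}_{r_2}\|_p\lesssim\|p_\Omega^s\widehat{\mu}_{r_1}\|_p$, i.e.\ controlling the large-ball average by the small-ball one, which is not pointwise. I would set $R:=\tanh(\lambda(r_1)+\lambda(r_2))\in(0,1)$, so that $\lambda(R)=\lambda(r_1)+\lambda(r_2)$ and hence $B_\Omega(\zeta,r_1)\subseteq B_\Omega(z,R)$ whenever $\zeta\in B_\Omega(z,r_2)$, by the triangle inequality. Integrating $\widehat{\mu}_{r_1}$ over $B_\Omega(z,R)$, using $V(B_\Omega(w,r_1))\asymp p_\Omega(z)^2$ for $w\in B_\Omega(z,R)$, and applying Fubini with the symmetry $\zeta\in B_\Omega(w,r_1)\iff w\in B_\Omega(\zeta,r_1)$, gives
\[
\int_{B_\Omega(z,R)}\widehat{\mu}_{r_1}(w)\,dV(w)\asymp\frac{1}{p_\Omega(z)^2}\int_\Omega V\bigl(B_\Omega(z,R)\cap B_\Omega(\zeta,r_1)\bigr)\,d\mu(\zeta).
\]
Restricting the last integral to $\zeta\in B_\Omega(z,r_2)$, where the intersection is all of $B_\Omega(\zeta,r_1)$ and has volume $\asymp p_\Omega(z)^2$, produces the key reverse bound
\[
\widehat{\mu}_{r_2}(z)\asymp\frac{\mu(B_\Omega(z,r_2))}{p_\Omega(z)^2}\lesssim\frac{1}{V(B_\Omega(z,R))}\int_{B_\Omega(z,R)}\widehat{\mu}_{r_1}(w)\,dV(w).
\]

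To finish I would convert this to an $L^p$ estimate. Multiplying by $p_\Omega(z)^s$ and using \eqref{jubukebi} to swap $p_\Omega(z)^s$ for $p_\Omega(w)^s$ under the integral, the right-hand side becomes the averaging operator $Mg(z)=\frac{1}{V(B_\Omega(z,R))}\int_{B_\Omega(z,R)}g\,dV$ applied to $g:=p_\Omega^s\widehat{\mu}_{r_1}\ge 0$, so it suffices to show $\|Mg\|_p\lesssim\|g\|_p$ for $1<p<\infty$. Here Jensen's inequality (using $p>1$) gives $Mg(z)^p\le\frac{1}{V(B_\Omega(z,R))}\int_{B_\Omega(z,R)}g^p\,dV$, and then Fubini together with $V(B_\Omega(z,R))\asymp V(B_\Omega(w,R))$ for $z\in B_\Omega(w,R)$ yields $\int_\Omega (Mg)^p\,dV\lesssim\int_\Omega g^p\,dV$. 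Combining the two directions establishes the claim. The main obstacle is precisely the substantial direction: one must recognise that the large-to-small passage cannot be done pointwise, and instead pair the radii through $\lambda(R)=\lambda(r_1)+\lambda(r_2)$ so that the shrunken integration domain still recaptures $\mu(B_\Omega(z,r_2))$ after the Fubini step; the averaging-operator bound is then routine.
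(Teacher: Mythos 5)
Your proof is correct, but there is nothing internal to compare it against: the paper does not prove this lemma, it quotes it verbatim from \cite[Lemma 5.3]{Zhang2025}. Judged on its own, your argument is sound and, notably, self-contained within the paper's toolkit: the polydisc sandwich of Lemma \ref{polydisks} (giving $V(B_\Omega(z,t))\asymp p_\Omega(z)^2$ for each fixed $t\in(0,1)$), the comparability \eqref{jubukebi} of $p_\Omega$ on Kobayashi balls, and the triangle inequality for $d_\Omega^K$. Both directions check out. The small-by-large bound is indeed pointwise, and you correctly identify that the reverse cannot be pointwise (a point mass shows $\widehat{\mu}_{r_2}\lesssim\widehat{\mu}_{r_1}$ fails); your remedy --- choosing $R$ with $\lambda(R)=\lambda(r_1)+\lambda(r_2)$ so that $B_\Omega(\zeta,r_1)\subseteq B_\Omega(z,R)$ for $\zeta\in B_\Omega(z,r_2)$, then Fubini to get $\widehat{\mu}_{r_2}(z)\lesssim\frac{1}{V(B_\Omega(z,R))}\int_{B_\Omega(z,R)}\widehat{\mu}_{r_1}\,dV$ --- is valid, and the concluding $L^p$ bound for the ball-averaging operator via Jensen plus Fubini (using $V(B_\Omega(z,R))\asymp V(B_\Omega(w,R))$ for $z\in B_\Omega(w,R)$) is correct for $1<p<\infty$. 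Methodologically this maximal-operator route differs from the discretization style used elsewhere in the paper (compare \eqref{sanjiaobudengshi} in the proof of (iii)$\Rightarrow$(iv)), where one controls averaging functions by passing through an $r$-lattice and an $\ell^{p}$ sum; your argument avoids lattices entirely, and yields the asserted two-sided norm comparability with constants depending only on $r_1,r_2$ (and $s$, $p$, $\Omega$). The only cosmetic quibbles: the first display of Lemma \ref{polydisks} concerns all radii at once so your ``constant depending on $t$'' is really ``on $t$ and $n$'', and, like the paper, you tacitly use measurability of $z\mapsto\mu(B_\Omega(z,r))$; neither affects correctness.
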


\begin{lemma}\cite[Lemma 5.1]{Zhang2025}\label{lem4.2}
  Suppose \( r \in (0, 1) \) and \( 0 < p < \infty \). Then we have
\[\int_{\Omega} |f(z)|^p d\mu(z) \lesssim \int_{\Omega} |f(z)|^p \widehat{\mu}_r(z) dV(z)\]
for all \( f \in \cH(\Omega) \).
\end{lemma}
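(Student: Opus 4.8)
The plan is to prove the inequality by unfolding the averaging function on the right-hand side and exchanging the order of integration, so that the measure \(\mu\) ends up on the outside. I would begin by writing \(\widehat{\mu}_r(w) = \mu(B_\Omega(w,r))/V(B_\Omega(w,r))\) and \(\mu(B_\Omega(w,r)) = \int_\Omega \chi_{B_\Omega(w,r)}(\zeta)\,d\mu(\zeta)\), so that \(\int_\Omega |f(w)|^p \widehat{\mu}_r(w)\,dV(w)\) becomes a double integral in \((w,\zeta)\). Since the integrand is nonnegative, Tonelli's theorem lets me interchange \(dV(w)\) and \(d\mu(\zeta)\); crucially, because \(\rho_\Omega = \tanh d_\Omega^K\) is a genuine symmetric distance, the condition \(\zeta \in B_\Omega(w,r)\) is equivalent to \(w \in B_\Omega(\zeta,r)\), so after the swap the inner integral runs over \(w \in B_\Omega(\zeta,r)\).

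Next I would replace the weight \(V(B_\Omega(w,r))^{-1}\) by \(V(B_\Omega(\zeta,r))^{-1}\), which is legitimate up to a constant: by Lemma \ref{polydisks} the Kobayashi ball is squeezed between two polydiscs of polyradius comparable to \(\eta(\zeta)\), whence \(V(B_\Omega(\zeta,r)) \asymp p_\Omega(\zeta)^2\), and \eqref{jubukebi} gives \(p_\Omega(w) \asymp p_\Omega(\zeta)\) for \(w \in B_\Omega(\zeta,r)\), so \(V(B_\Omega(w,r)) \asymp V(B_\Omega(\zeta,r))\). At this point the right-hand side is comparable to \(\int_\Omega \bigl(\tfrac{1}{V(B_\Omega(\zeta,r))}\int_{B_\Omega(\zeta,r)} |f(w)|^p\,dV(w)\bigr)\,d\mu(\zeta)\), i.e. the integral against \(\mu\) of the mean value of \(|f|^p\) over the Kobayashi ball centered at \(\zeta\).

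Finally I would bound this mean value from below by \(|f(\zeta)|^p\). Since \(f\) is holomorphic, \(|f|^p\) is plurisubharmonic for every \(p>0\), so it satisfies the sub-mean value inequality over any polydisc centered at \(\zeta\); applying this to the inscribed polydisc \(\mathbb{D}^n(\zeta, c_1\eta(\zeta)) \subset B_\Omega(\zeta,r)\) of Lemma \ref{polydisks} and using \(V(\mathbb{D}^n(\zeta,c_1\eta(\zeta))) \asymp V(B_\Omega(\zeta,r))\) yields \(|f(\zeta)|^p \lesssim \tfrac{1}{V(B_\Omega(\zeta,r))}\int_{B_\Omega(\zeta,r)}|f|^p\,dV\). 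Integrating against \(d\mu\) then gives \(\int_\Omega |f|^p\,d\mu \lesssim \int_\Omega |f|^p\widehat{\mu}_r\,dV\), as desired.

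The point that needs the most care---and the reason I route through the inscribed polydisc rather than through Lemma \ref{submean} directly---is obtaining the sub-mean value inequality over the \emph{same} ball \(B_\Omega(\zeta,r)\) that appears after the Tonelli step: Lemma \ref{submean} as stated controls \(\varphi(\zeta)\) by an average over the strictly larger ball \(B_\Omega(\zeta,R)\) with \(R=\tfrac12(1+r)\), which would force a radius enlargement and only deliver \(\widehat{\mu}_{R}\) in place of \(\widehat{\mu}_r\). Because the polydisc furnished by Lemma \ref{polydisks} is concentric with \(\zeta\), the classical plurisubharmonic sub-mean value property applies on it directly and keeps the radius fixed at \(r\), so no appeal to a radius-invariance statement (such as Lemma \ref{wuguan}, which is tailored to \(p_\Omega^s\) weights rather than \(|f|^p\)) is needed. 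The remaining steps---Tonelli and the volume comparisons---are routine given the quoted geometric lemmas.
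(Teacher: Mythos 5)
Your proof is correct, but note that the paper itself contains no argument to compare against: Lemma \ref{lem4.2} is imported by citation from \cite[Lemma 5.1]{Zhang2025}. Judged on its own merits, your argument is a valid, self-contained proof. The Tonelli step is legitimate since the Kobayashi pseudo-distance is symmetric (so \(\zeta \in B_\Omega(w,r) \iff w \in B_\Omega(\zeta,r)\)) and the set \(\{(w,\zeta) : \rho_\Omega(w,\zeta) < r\}\) is open, hence jointly measurable; the volume comparison \(V(B_\Omega(w,r)) \asymp V(B_\Omega(\zeta,r))\) for \(w \in B_\Omega(\zeta,r)\) follows, as you say, from Lemma \ref{polydisks} (which gives \(V(B_\Omega(z,r)) \asymp p_\Omega(z)^2\)) combined with \eqref{jubukebi}; and the sub-mean value inequality for the plurisubharmonic function \(|f|^p\) over the concentric inscribed polydisc \(\mathbb{D}^n(\zeta, c_1\eta(\zeta)) \subset B_\Omega(\zeta,r)\) is classical (iterate the one-variable subharmonic mean value inequality coordinate by coordinate), with \(V(\mathbb{D}^n(\zeta,c_1\eta(\zeta))) \asymp p_\Omega(\zeta)^2 \asymp V(B_\Omega(\zeta,r))\). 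Your closing remark is also well taken and is the one genuinely delicate point: invoking Lemma \ref{submean} at \(z = z_0 = \zeta\) would replace the inner domain of integration by the strictly larger ball \(B_\Omega(\zeta,R)\), \(R = \tfrac{1}{2}(1+r)\), and after reversing the Tonelli step this produces \(\widehat{\mu}_R\) rather than \(\widehat{\mu}_r\); since \(\widehat{\mu}_R \gtrsim \widehat{\mu}_r\) pointwise (not \(\lesssim\)), the enlargement cannot be undone pointwise, and Lemma \ref{wuguan} concerns \(L^p\)-norms of \(p_\Omega^s\widehat{\mu}_r\) rather than the weighted integrals of \(|f|^p\) appearing here. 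Routing through the inscribed polydisc keeps the radius fixed and yields the statement for every \(r \in (0,1)\), which is arguably cleaner than lattice-based arguments that only give the inequality after a radius change.
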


Now we give the proof of Theorem \ref{mainth}
\begin{proof}[Proof of Theorem \ref{mainth}]
  Thanks to the Montel's theorem and Lemma \ref{wuguan}, we only need to prove (ii) \(\implies\) (iii) \(\implies\) (iv) \(\implies\) (ii), (ii) \(\implies \) (i).

(ii) \(\implies\) (iii). For \(\{c_k\}\in l^p\), let
\[f(z)=\sum_{k=0}^\infty c_kK_\Omega(z,w_k)^NK_\Omega(w_k,w_k)^{\frac{1-\alpha}{p}-N}.\] 
By theorem \ref{atomic}, then \(f\in A_\alpha^p\) and \(\|f\|_{A_\alpha^p}^p\lesssim \sum_{k=0}^\infty |c_k|^p\).
Since \(J_\mu\) is bounded, hence 
  \begin{align}\label{bounded}
    \begin{aligned}
     &\int_{\Omega} \bigg|\sum_{k=0}^\infty c_kK_\Omega(z,w_k)^NK_\Omega(w_k,w_k)^{\frac{1-\alpha}{p}-N}\bigg|^qd\mu(z)\\
  &\leq\|\mu\|^q\|f\|_{A_\alpha^p}^q\\
  &\lesssim \|\mu\|^q\left(\sum_{k=0}^\infty |c_k|^p\right)^{\frac{q}{p}}. 
    \end{aligned}
  \end{align}

Let \[\begin{cases}
  r_0(t)=\begin{cases}
    1,\,\,0\leq t-[t]\frac{1}{2},\\
    -1,\,\,\frac{1}{2}\leq t-[t]<1,
  \end{cases}\\
  r_k(t)=r_0(2^kt), \,\,k=1,2,3,\dots
\end{cases}
\]
 be Rademacher functions on \((0,1)\) and set
 \[f_t(z)=\sum_{k=0}^\infty r_k(t) c_kK_\Omega(z,w_k)^NK_\Omega(w_k,w_k)^{\frac{1-\alpha}{p}-N}.\] Replacing \(f(z)\) by \(f_t(z)\) in \eqref{bounded}, integrate with respect to \(t\) from \(0\) to \(1\),  by Fubini's theorem and Khinchine's inequality, we obtain that 
\begin{align}\label{eq4.2}
  \begin{aligned}
  &\int_\Omega \left(\sum_{k=0}^\infty |c_k|^2K_\Omega(w_k,w_k)^{2(\frac{1-\alpha}{p}-N)}|K_\Omega(z,w_k)|^{2N}\right)^{\frac{q}{2}}d\mu(z)\\
  &\lesssim\int_\Omega\int_0^1 \bigg|\sum_{k=0}^\infty r_k(t) c_kK_\Omega(z,w_k)^NK_\Omega(w_k,w_k)^{\frac{1-\alpha}{p}-N}\bigg|^qdtd\mu(z)\\
 & =\int_0^1\int_\Omega \bigg|\sum_{k=0}^\infty r_k(t) c_kK_\Omega(z,w_k)^NK_\Omega(w_k,w_k)^{\frac{1-\alpha}{p}-N}\bigg|^qd\mu(z)dt\\
 &\lesssim\|\mu\|^q\left(\sum_{k=0}^\infty |c_k|^p\right)^{\frac{q}{p}}.
  \end{aligned}
\end{align}

On the one hand, if \(q\geq 2\), then
\begin{align*}
 & \int_\Omega \left(\sum_{k=0}^\infty |c_k|^2K_\Omega(w_k,w_k)^{2(\frac{1-\alpha}{p}-N)}|K_\Omega(z,w_k)|^{2N}\right)^{\frac{q}{2}}d\mu(z)\\
  &\geq \int_\Omega \left(\sum_{k=0}^\infty |c_k|^2K_\Omega(w_k,w_k)^{2(\frac{1-\alpha}{p}-N)}|K_\Omega(z,w_k)|^{2N}\chi_k(z)\right)^{\frac{q}{2}}d\mu(z)\\
  &\geq\int_{D_k} \sum_{k=0}^\infty |c_k|^qK_\Omega(w_k,w_k)^{q(\frac{1-\alpha}{p}-N)}|K_\Omega(z,w_k)|^{qN}d\mu(z).
\end{align*}

On the other hand, if \(0<q<2\), then H\"older's inequality yields that 
\begin{align*}
 & \int_{D_k} \sum_{k=0}^\infty |c_k|^qK_\Omega(w_k,w_k)^{q(\frac{1-\alpha}{p}-N)}|K_\Omega(z,w_k)|^{qN}d\mu(z)\\
&=\int_\Omega \sum_{k=0}^\infty |c_k|^qK_\Omega(w_k,w_k)^{q(\frac{1-\alpha}{p}-N)}|K_\Omega(z,w_k)|^{qN}\chi_k(z)d\mu(z)\\
&\leq\int_\Omega\left(\sum_{k=0}^\infty |c_k|^2K_\Omega(w_k,w_k)^{2(\frac{1-\alpha}{p}-N)}|K_\Omega(z,w_k)|^{2N}\right)^{\frac{q}{2}}\left(\sum_{k=0}^\infty \chi_k(z)\right)^{1-\frac{q}{2}}d\mu(z)\\
&\leq N^{1-\frac{q}{2}}\int_\Omega\left(\sum_{k=0}^\infty |c_k|^2K_\Omega(w_k,w_k)^{2(\frac{1-\alpha}{p}-N)}|K_\Omega(z,w_k)|^{2N}\right)^{\frac{q}{2}}d\mu(z).
\end{align*}

In a word 
\begin{align*}
  &\int_{D_k} \sum_{k=0}^\infty |c_k|^qK_\Omega(w_k,w_k)^{q(\frac{1-\alpha}{p}-N)}|K_\Omega(z,w_k)|^{qN}d\mu(z)\\
  &\lesssim\left(\sum_{k=0}^\infty |c_k|^2K_\Omega(w_k,w_k)^{2(\frac{1-\alpha}{p}-N)}|K_\Omega(z,w_k)|^{2N}\right)^{\frac{q}{2}}d\mu(z)
\end{align*}
holds for all \(0<q<\infty\). By \eqref{lower2} and \eqref{eq4.2}, we obtain that 
\begin{align}\label{dual}
  \sum_{k=0}^\infty|c_k|^qK_\Omega(w_k,w_k)^{q(\frac{1-\alpha}{p})}\mu(D_k)\lesssim\|\mu\|^q\left(\sum_{k=0}^\infty |c_k|^p\right)^\frac{q}{p}.
\end{align}

Let \(a_k=|c_k|^q\), then \(\{a_k\}\in l^\frac{p}{q}\). The fact that \((l^\frac{p}{q})^*=l^\frac{p}{p-q}\), Lemma \ref{diagonal} and \eqref{dual} yield
\begin{align*}
  \|\mu\|&\gtrsim \|\mu(D_k)K_\Omega(w_k,w_k)^{q(\frac{1-\alpha}{p})}\|_{l^\frac{p}{p-q}}^\frac{1}{q}\\
  &\asymp\|\widehat{\mu}_r(B_\Omega(w_k,r))p_\Omega(w_k)^{2+\frac{2q(\alpha-1)}{p}}\|_{l^\frac{p}{p-q}}^\frac{1}{q},
\end{align*}
which give the proof of (ii) \(\implies\) (iii)  if replace \(w_k\) by some \(r\)-lattice \(\{a_j\}\).

(iii)\(\implies\) (iv) have been proved in \cite{Zhang2025}, we rewrite it here for the sake of completeness.  

Let \(\{a_j\}_{j=1}^\infty\) be some \(r\)-lattice such that \(B_\Omega(z,r)\subset B_\Omega(a_j,r)\) for all \(z\in B_\Omega(a_j,r)\). For all \(z\in B_\Omega(a_j,r)\),  Lemma \ref{polydisks} and \eqref{jubukebi} yield that 
\begin{align}\label{sanjiaobudengshi}
  \begin{aligned}
\widehat{\mu}_{R}(z) &\approx p_{\Omega}(z)^{-2} \mu \left( B_{\Omega}(z, R) \right) \\
&\lesssim p_{\Omega}(a_{j})^{-2} \mu \left( B_{\Omega}(a_{j}, r) \right) \approx \widehat{\mu}_{r}(a_{j}).
  \end{aligned}
\end{align} 
Lemma \ref{polydisks}, lemma \ref{cover} and \eqref{sanjiaobudengshi} yield that 
\begin{align*}
\left\| p_{\Omega}(z)^{\frac{2\alpha q}{p}} \widehat{\mu}_R(z) \right\|_{p-q}^{\frac{p}{p-q}} 
&\lesssim \sum_{j=1}^{\infty} \int_{B_\Omega(a_j,r)} p_{\Omega}(z)^{\frac{2\alpha q}{p-q}} \widehat{\mu}_R(z)^{\frac{p}{p-q}} \, dV(z) \\
&\lesssim \sum_{j=1}^{\infty} p_{\Omega}(a_j)^{2+\frac{2\alpha q}{p-q}} \widehat{\mu}_r(a_j)^{\frac{p}{p-q}} \\
&= \left\| \left\{ p_{\Omega}(a_j)^{2+\frac{2(\alpha-1)q}{p}} \widehat{\mu}_r(a_j) \right\}_{j=1}^{\infty} \right\|_{l^{\frac{p}{p-q}}}^{\frac{p}{p-q}} < \infty.
\end{align*}


(iv) \(\implies\) (ii). Since \(p>q\), hence \(\frac{1}{\frac{p}{q}}+\frac{1}{\frac{p}{p-q}}=1\). For \(f\in A_\alpha^p\), Lemma \ref{lem4.2} and Lemma \ref{diagonal} yield that 
\begin{align*}
 & \int_{\Omega} |f(z)|^q d\mu(z) \\
 &\lesssim \int_{\Omega} |f(z)|^q \widehat{\mu}_R(z) dV(z)\\
&\leq \bigg\|| f(z)|^q K_\Omega(z, z)^{\frac{\alpha q}{p}} \Big\|_{\frac{p}{q}} \cdot \left\| K_\Omega(z, z)^{-\frac{\alpha q}{p}} \widehat{\mu}_R(z) \right\|_{\frac{p}{p-q}}\\
&\asymp\left\| p_\Omega(z)^{\frac{2\alpha q}{p}} \widehat{\mu}_R(z) \right\|_{\frac{p}{p-q}}  \| f \|_{A_\alpha^p}^q\\
&\lesssim  \| f \|_{A_\alpha^p}^q. 
\end{align*}


(ii) \(\implies\) (i).  Let \(\{f_j\}\) be any bounded sequence in \(A_\alpha^p\) and \(f_j\to 0\) uniformly on each compact subset of \(\Omega\) as \(j\to \infty\). Since 
\(p_\Omega(z)^\frac{2\alpha q}{p}\widehat{\mu}_R(z)\in L^\frac{p}{p-q}\), hence there exists a set \(K_R\subset\Omega\) such that 
\begin{equation}\label{compact}
  \int_{\Omega\setminus K_R}\bigg|p_\Omega(z)^\frac{2\alpha q}{p}\widehat{\mu}_R(z)\bigg|^\frac{p}{p-q}dV(z)<\varepsilon.
\end{equation}

Consider 
\begin{align*}
  \int_\Omega |f_j(z)|^qd\mu(z)&=\left(\int_{K_R}+\int_{\Omega\setminus K_R}\right)|f_j(z)|^qd\mu(z)\\
  &=I_1+I_2.
\end{align*}
For \(I_1\), since \(f_j\to 0\) uniformly on each compact subset of \(\Omega\), then we have 
\[I_1=\int_{K_R} |f_j(z)|^q\leq C\sup_{z\in K_R} |f_j(z)|^q<\frac{\varepsilon}{2},\quad j\to\infty.\]

Let \(\mu_R\) be the restriction of \(\mu\) to \(\Omega\setminus K_R\), noting that  \(J_\mu\) is bounded if and only if \(p_\Omega(z)^\frac{2\alpha q}{p}\widehat{\mu}_R\in L^\frac{p}{p-q}\), by \eqref{compact} we obtain that 
\begin{align*} 
  I_2=&\int_{\Omega\setminus K_R}|f_j(z)|^qd\mu(z)\\
  &=\int_\Omega|f_j(z)|^qd\mu(z)\\
  &\lesssim\left\| p_\Omega(z)^{\frac{2\alpha q}{p}} \widehat{\mu}_R(z) \right\|_{\frac{p}{p-q}}  \| f \|_{A_\alpha^p}^q\\
  &\lesssim \frac{\varepsilon}{2}.
\end{align*}
Therefore, 
\[\lim_{j\to\infty}\int_\Omega |f_j(z)|^qd\mu(z)=0,\]
which means that \(\mu\) is a vanishing \(q\)-Carleson measure, hence \(J_\mu\) is compact.
The proof is completed. 
\end{proof} 

\section*{Acknowledgments} This research work was supported by the National Natural Science Foundation of China~(Grant No.~12261023,~11861023.)

\bibliographystyle{amsplain}

\end{document}